\DeclarePairedDelimiter{\set}{\{}{\}}
\newtheorem{definition}{Definition}
\newtheorem*{definition*}{Definition}
\newtheorem*{prop*}{Theorem}
\newtheorem{prop}[definition]{Proposition}
\newtheorem{cor}[definition]{Corollary}
\newtheorem{lemma}[definition]{Lemma}
\newtheorem{theorem}[definition]{Theorem}
\newtheorem{question}[definition]{Question}
\newtheorem{questionss}[definition]{Questions}
\newtheorem{remark}[definition]{Remark}
\numberwithin{definition}{section}
\def\D{{\mathbb{D}}}
\def\R{{\mathbb{R}}}
\def\C{{\mathbb{C}}}
\def\Q{{\mathbb{Q}}}
\def\N{{\mathbb{N}}}
\def\T{{\mathbb{T}}}
\def\UU{{\mathcal{U}}}
\def\DD{{\mathcal{D}}}
\def\AA{{\mathcal{A}}}
\def\e{\varepsilon}
\newcommand{\vp}{\varphi}
\begin{document}

	\title{Abel universal functions: boundary behaviour and Taylor polynomials}

	\author{St\'{e}phane Charpentier,  Myrto Manolaki, Konstantinos Maronikolakis}
	
	\address{St\'ephane Charpentier, Institut de Math\'ematiques, UMR 7373, Aix-Marseille
	Universite, 39 rue F. Joliot Curie, 13453 Marseille Cedex 13, France}
	\email{stephane.charpentier.1@univ-amu.fr}
	
	\address{Myrto Manolaki, UCD School of Mathematics and Statistics, University College Dublin, Belfield, Dublin 4, Ireland}
	\email{arhimidis8@yahoo.gr}
	
	\address{Konstantinos Maronikolakis, UCD School of Mathematics and Statistics, University College Dublin, Belfield, Dublin 4, Ireland}
	\email{conmaron@gmail.com}
	
	\thanks{The research conducted in this paper was funded by the Irish Research Council and the French Ministry of Foreign Affairs under the Ulysses 2022 Scheme (No 48450RA). Konstantinos Maronikolakis acknowledges financial support from the Irish Research Council through the grant [GOIPG/2020/1562].}
	
	\keywords{universal radial approximation, dilation, boundary behaviour, universal Taylor series}
	\subjclass[2020]{30K15, 30B30, 30E10}

	\maketitle
	
%\email{stephane.charpentier.amu@gmail.com}

%\author[Manolaki]{}

	\begin{abstract}
	A holomorphic function $f$ on the unit disc $\D$ belongs to the class $\mathcal{U}_A (\mathbb{D})$ of Abel universal functions if the family $\{f_r: 0\leq r<1\}$ of its dilates $f_r(z):=f(rz)$ is dense in the space of continuous functions on $K$, for any proper compact subset $K$ of the unit circle. It has been recently shown that $\mathcal{U}_A (\mathbb{D})$ is a dense $G_{\delta}$ subset of the space of holomorphic functions on $\mathbb{D}$ endowed with the topology of local uniform convergence. In this paper, we develop further the theory of universal radial approximation by investigating the boundary behaviour of functions in $\mathcal{U}_A (\mathbb{D})$ (local growth, existence of Picard points and asymptotic values) and the convergence properties of their Taylor polynomials outside $\mathbb{D}$.
		\end{abstract}
	
\section{Introduction}	
	Let $H(\mathbb{D})$ be the space of holomorphic functions on the unit disc $\mathbb{D}$ endowed with the topology of local uniform convergence. It is known that most functions in $H(\mathbb{D})$ have maximal cluster sets along any radius. More specifically, Kierst and Szpilrajn in \cite{KierstSzpilrajn1933} showed that the set of all functions $f$ in $H(\mathbb{D})$ with the property that, for any $\zeta$ in the unit circle $\mathbb{T}$ and any $w\in\mathbb{C}$, there exists a sequence $(r_n)$ in $[0,1)$ converging to $1$ such that $\lim_{n\to\infty}f(r_n\zeta)=w$, is residual in $H(\mathbb{D})$ (that is, it contains a dense $G_{\delta}$ set). An analogue of this result holds if we replace radii by paths with endpoints on the unit circle. The existence of such functions was established (among more general results) in \cite{BoivinGauthierParamonov2002} and the residuality in \cite{BernalGonzalezCalderonMorenoPradoBassas2004} (see also 
\cite{BernalGonzalezCalderonMorenoPradoBassas2005,BernalGonzalezBonillaCalderonMorenoPradoBassas2007,
BernalGonzalezBonillaCalderonMorenoPradoBassas2009} for properties of such functions). Another variant of the result of Kierst and Szpilrajn was shown by Bayart in \cite{Bayart2005}, who proved that the set of all functions $f$ in $H(\mathbb{D})$ with the property that, for any measurable function $\vp$ on $\mathbb{T}$ there exists a sequence $(r_n)$ in $[0,1)$ tending to $1$ such that $f(r_{n}\zeta)\to\vp (\zeta)$ as $n\to \infty$ for a.e. $\zeta$ in $\mathbb{T}$, is residual in $H(\mathbb{D})$. 

In this paper, we will focus on a class of functions in $H(\D)$ that possess an even more chaotic boundary behaviour:  

\begin{definition*}[Abel universal function]\label{def-Abel-univ-funct-bis}
A function $f$ in $H(\D)$ is called \textit{Abel universal} if it satisfies the following property: for every compact set $K$ in the unit circle $\mathbb{T}$, different from $\mathbb{T}$, and every continuous function $\vp$ on $K$, there exists a sequence $(r_n)$ in $[0,1)$ tending to $1$ such that $\sup_{\zeta\in K}|f(r_{n}\zeta)-\vp (\zeta)|\to 0$ as $n\to \infty$.
	The class of Abel universal functions will be denoted by $\UU_A(\D)$.
\end{definition*}
In \cite{Charpentier2020}, it was shown that $\mathcal{U}_A (\mathbb{D})$ is a dense $G_{\delta}$ subset of $H(\mathbb{D})$, which unifies the results in \cite{KierstSzpilrajn1933}, \cite{BoivinGauthierParamonov2002}, \cite{BernalGonzalezCalderonMorenoPradoBassas2004} and \cite{Bayart2005} that we mentioned before. In higher dimensions the situation is more delicate. Indeed, in \cite{GlobevnikStout1982}, it was shown that no holomorphic function on the unit ball $B$ of $\mathbb{C}^N$ ($N\geq 2$) can be unbounded (and hence dense) along every path with endpoint on $\partial B$. More recently, Globevnik, to address a conjecture in complex geometry from 1977, constructed a holomorphic function on $B$ which is unbounded along any path of finite length with endpoint on $\partial B$ (see \cite{Globevnik2015}). It turns out that, if we restrict to paths of finite length, we can replace unbounded by dense and get an analogue of the result in \cite{BernalGonzalezCalderonMorenoPradoBassas2004} that we mentioned before for several complex variables (see \cite{CharpentierKosinski2021}). 

There is a growing literature about properties and variants of Abel universal functions \cite{Charpentier2020,CharpentierKosinski2021,CharpentierMouze2022,Maronikolakis2022,Meyrath2022,CharpManoMaron}, but there is still much scope for investigation. Our objective is to study two different, but intimately connected, aspects of Abel universal functions: i) their behaviour as we approach the boundary through certain regions in $\mathbb{D}$, and ii) the behaviour of their Taylor polynomials outside $\mathbb{D}$.

It is easy to see that a function $f$ in $H(\D)$ is Abel universal if and only if the family $\{f_r:\,0\leq r<1\}$ of its \textit{dilates} $f_r(z):=f(rz)$, $z\in \overline{\D}$, is dense in the space $C(K)$ of complex-valued continuous functions on $K$, for any compact subset $K$ of $\T$, different from $\T$. Dilation is one of the most standard techniques used in the study of the boundary behaviour of functions in $H(\D)$ since it enables us to move from the unknown territory of the boundary to a region in $\mathbb{D}$, where our function is well defined and nicely behaved. For a comprehensive overview of dilation theory over the past century we refer the reader to the recent survey of Mashreghi \cite{MashreghiDilation2022}. The other standard technique for studying the boundary behaviour of functions in $H(\D)$ involves considering the partial sums of their Taylor expansion, which also behave nicely in $\mathbb{D}$ since they converge locally uniformly to our function. Thus, the class $\mathcal{U}_A (\mathbb{D})$ of Abel universal functions is the natural analogue of the well-studied class $\mathcal{U}_T(\mathbb{D})$ of \textit{universal Taylor series}. We say that a function $f$ in $H(\mathbb{D})$ belongs to $\mathcal{U}_T(\mathbb{D})$ if, for every compact set $K\subset\C\setminus\mathbb{D}$ with connected complement, the set $\{S_n(f): n\in \mathbb{N}\}$ of the \textit{partial sums} of its Taylor expansion about $0$, is dense in the space of functions in $C(K)$ that are holomorphic in the interior of $K$. In 1996, Nestoridis showed that $\mathcal{U}_T(\mathbb{D})$ is a dense $G_{\delta}$ subset of $H(\mathbb{D})$ \cite{Nestoridis1996}. Since then, various properties of universal Taylor series have been intensively studied, such as growth \cite{MelasNestoridisPapadoperakis1997,Melas2000,GardinerKhavinson2014}, gap structure of the Taylor expansion \cite{GehlenLuhMuller2000,MVY2006}, and boundary behaviour \cite{Gardiner2013,GardinerKhavinson2014,Gardiner2014,GardinerManolaki2016}. For a detailed account of several interesting boundary properties of functions in $\mathcal{U}_T(\mathbb{D})$ and the significance of the role of potential theory in their proofs, we refer to the survey of Gardiner \cite{Gardiner2018}. We mention below three motivating results, which show that universal Taylor series have an extremely wild boundary behaviour. Let  $f\in\mathcal{U}_T(\mathbb{D})$. Then:
\begin{itemize}
\item[(a)] $f$ satisfies a Picard-type property near each boundary point; that is, for every region of the form $D_{\zeta,r}:=\{z\in \mathbb{D}: |z-\zeta |<r \}$, where $r>0$ and $\zeta\in\T$, the image $f(D_{\zeta,r})$ is the entire complex plane $\mathbb{C}$ except possibly a point \cite[Corollary 2]{GardinerKhavinson2014}. 
\item[(b)]  For a.e. $\zeta$ in $\T$ and any open triangle $T_{\zeta}$ in $\D$ which has vertex at $\zeta$ and is symmetric about the radius $[0,\zeta]$, the image $f(T_{\zeta})$ is dense in $\mathbb{C}$. In particular, $f$ cannot have nontangential limits at a boundary set of positive measure \cite[Theorem 2]{Gardiner2014}.
\item[(c)] There is a residual subset $Z$ of $\T$ such that the image $\{f(r \zeta ): 0\leq r<1 \}$ is dense in $\mathbb{C}$ for every $\zeta\in Z$ \cite[Corollary 3]{Gardiner2014}.
\end{itemize}
We note that in part (c) (which follows from part (b)), the set $Z$ cannot be replaced by $\T$. Indeed, in \cite{Costakis2005} it is shown that given any closed and nowhere dense subset $E$ of $\T$, there is a function in $\mathcal{U}_T(\mathbb{D})$ that has finite radial limits on $E$. This shows that $\mathcal{U}_T(\mathbb{D})$ is not contained in $\mathcal{U}_A (\mathbb{D})$. The converse is also true, since all functions in $\mathcal{U}_T(\mathbb{D})$ have Ostrowski gaps \cite{GehlenLuhMuller2000,MVY2006}, which is not the case for all functions in $\mathcal{U}_A (\mathbb{D})$ \cite{Charpentier2020}. It is worth mentioning that the existence of Ostrowski gaps played a significant role in showing that universal Taylor series have the Picard-type property (a). Thus, it is natural to ask if this property holds for Abel universal functions. 

In Section \ref{Picard}, we show that the classes $\mathcal{U}_T(\mathbb{D})$ and $\mathcal{U}_A (\mathbb{D})$, although not strongly correlated, have similar boundary behaviour. In particular, we prove that such functions can have arbitrary growth on each closed set $A\subset \D$ such that $\overline{A}\cap \T$ is a singleton (see Theorem \ref{thm-growth-restricted}). We also show that this result, which is the dilation-analogue of \cite[Proposition 6]{Gardiner2014}, fails if $\overline{A}$ contains two paths in $\D$ with two different endpoints on $\mathbb{T}$ (see Proposition \ref{prop-2-radii}). As an application, we deduce that all functions in $\mathcal{U}_A (\mathbb{D})$ satisfy the Picard-type property (a) near each boundary point. Interestingly, we note that this property, as well as many other boundary properties of Abel universal functions (see Corollary \ref{coro-Abel-univ-funct-all-prop}), can be deduced using results about \textit{Valiron functions} \cite{BarthRippon2006} and the \textit{MacLane class} \cite{MacLane1963,Hornblower1971}. For example, one such boundary property is that all functions in $\mathcal{U}_A (\mathbb{D})$ admit $\infty$ as an asymptotic value. Finally, we discuss their angular boundary behaviour (see Corollary \ref{length}) and conclude the section with some open questions.

As we saw before, the chaotic behaviour of the Taylor polynomials of functions in $\mathcal{U}_T(\mathbb{D})$ outside $\mathbb{D}$ endows them with an extremely wild boundary behaviour. In Section \ref{partial-sums} we study the dual question: \textit{does the wild boundary behaviour of functions in $\mathcal{U}_A(\mathbb{D})$ cause a chaotic behaviour of their Taylor polynomials outside the disc of convergence?} Since the intersection $\mathcal{U}_T(\mathbb{D})\cap\mathcal{U}_A (\mathbb{D})$ is residual in $H(\mathbb{D})$, it is tempting to think that the partial sums $S_n(f)$ of the Taylor expansion of a function $f$ in $\mathcal{U}_A(\mathbb{D})$ will have a similar behaviour outside $\mathbb{D}$ (or at least on $\T$) with the ones of functions in $\mathcal{U}_T(\mathbb{D})$; that is, for each $\zeta$ in $\mathbb{C}\setminus\D$ (or in $\T$), the set $\{ S_n(f)(\zeta): n\in\mathbb{N}\}$ will be \textit{dense} in $\mathbb{C}$. We note that if $f$ is a function in $H(\D)$ which has a maximal cluster set along the radius through some point $\zeta\in\T$,
we can apply the formula 
$$f(r\zeta)=\sum_{k=0}^\infty r^k(1-r)S_k(f)(\zeta)  \ \ \ ( r\in [0,1))$$ 
to deduce that the set $\{S_n(f)(\zeta): n\in\mathbb{N}\}$ is \textit{unbounded}. It turns out, that for certain Abel universal functions we cannot replace \textit{unbounded} by \textit{dense}. Indeed, as it was shown in \cite{CharpentierMouze2022}, there are Abel universal functions $f$ such that $\{ S_n(f)(1): n\in\mathbb{N}\}$ is not dense. In fact, we will prove the following much stronger result:
 
 \begin{theorem}[Theorem \ref{thm-count-div-infty}, Corollary \ref{coro-div-to-infty-cap}, Corollary \ref{cor-sim-div-infty}]
\begin{itemize}
\item[]
\item[(a)] Given any countable set $E$ in $\mathbb{T}$, there exists $f\in\mathcal{U}_A(\D)$ such that $S_n(f)(\zeta)\to\infty$ as $n\to\infty$ for each $\zeta\in E$.
	\item[(b)]		There exists $f\in\mathcal{U}_A(\D)$ such that $S_n(f)(\zeta)\to\infty$ as $n\to\infty$ for a.e. $\zeta\in\partial\mathbb{D}$  (with respect to the arc length measure)
	and $S_n(f)(\zeta)\to\infty$ as $n\to\infty$ for a.e. $\zeta\in\C\setminus\overline{\D}$  (with respect to the 2-dimensional Lebesgue measure).
	\item[(c)]	
	There exists $f\in\mathcal{U}_A(\D)$ such that $(S_n(f))_n$ converges to $\infty$ locally in capacity on $\C\setminus\overline{\D}$;  that is, for every compact set $K$ in $\C\setminus\overline{\D}$ and $M>0$, we have $$\lim_{n\to\infty}\operatorname{cap}\left(\left\{z\in K:|S_n(f)(z)|\leq M\right\}\right)=0.$$
\end{itemize}
\end{theorem}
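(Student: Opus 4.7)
Our plan is to construct each of the three functions by an inductive polynomial-by-polynomial scheme, writing $f = \sum_{k \ge 1} P_k$ with strictly increasing degrees $d_k = \deg P_k$, chosen lacunary and with each $P_k$ having spectrum in $(d_{k-1}, d_k]$, so that $S_{d_k}(f) = \sum_{i \le k} P_i$ exactly. At step $k$ we apply Mergelyan's theorem on a compact set $L_k \subset \C$ of the form
\[
L_k = r_k K_{j(k)} \; \cup \; \{\abs{z} \le \rho_k\} \; \cup \; F_k
\]
(with $\C \setminus L_k$ connected), prescribing a target function which is (i) close to $\psi_k := \varphi_{j(k)}(\cdot/r_k) - S_{d_{k-1}}(f)$ on the arc $r_k K_{j(k)} \subset \D$; (ii) of modulus at most $2^{-k}$ on $\{\abs{z} \le \rho_k\}$; and (iii) of large prescribed value on a ``growth set'' $F_k$. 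Here $r_k, \rho_k \nearrow 1$, with $\rho_k$ close enough to $1$ relative to $d_k$ that Cauchy's estimates bound the intermediate partial sums $S_n(f)$ for $d_{k-1} \le n < d_k$ by $2^{-k}$ as well. The pairs $(K_{j(k)}, \varphi_{j(k)})_{k \ge 1}$ run through a countable family dense in the collection of pairs (proper compact subarc of $\T$, continuous target), and condition (ii), combined with the fact that later $P_m$ with $m>k$ satisfy $\sup_{r_k K_{j(k)}}\abs{P_m} \le 2^{-m}$ once $\rho_m \ge r_k$, yields $f \in \UU_A(\D)$.

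For part (a), enumerate $E = \{\zeta_j\}_{j \ge 1}$ and set $F_k = \{\zeta_1, \dots, \zeta_k\}$, prescribing $P_k(\zeta_j) \approx k - S_{d_{k-1}}(f)(\zeta_j)$ for each $j \le k$. The compact $L_k$ has connected complement (a finite set of boundary points together with a disc and an arc strictly inside $\D$), so Mergelyan delivers, and after step $k$ one has $S_{d_k}(f)(\zeta_j) \approx k$. Repeating the prescription at every subsequent step forces $S_{d_m}(f)(\zeta_j) \to \infty$ for each fixed $j$, and the Cauchy-based control of intermediate partial sums extends this to $S_n(f)(\zeta_j) \to \infty$ for all $n$.

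For (b), replace $F_k$ by a compact subset of $\{1 \le \abs{z} \le R_k\}$ (with $R_k \to \infty$) whose complement in that annulus has 2-dimensional Lebesgue measure less than $2^{-k}$ and whose intersection with $\T$ has arc-length measure at least $2\pi - 2^{-k}$, chosen with enough ``Swiss-cheese'' perforations that $\C \setminus L_k$ remains connected. Prescribe $P_k \approx k$ (a real positive constant) on $F_k$ and $\abs{P_k} \le 2^{-k}$ elsewhere on a neighbourhood of $\T$. A Borel--Cantelli argument on the sets $\{\zeta \notin F_k\}$ then yields, for almost every $\zeta$ with respect to arc length on $\T$ and to Lebesgue measure on $\C \setminus \overline{\D}$, that $\zeta \in F_k$ for every sufficiently large $k$, whence $S_n(f)(\zeta) \to \infty$. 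For (c) one strengthens the construction by requiring $F_k$ to cover $\{1 \le \abs{z} \le R_k\}$ outside an exceptional set of logarithmic capacity less than $2^{-k}$; a Chebyshev-type bound of the form $\operatorname{cap}(\{\abs{P} \le M\}) \lesssim (M/\abs{a})^{1/\deg P}$ (with $a$ the leading coefficient of $P$) then lets one conclude that for each compact $K \subset \C \setminus \overline{\D}$ and each $M>0$, $\operatorname{cap}(\{\abs{S_n(f)} \le M\} \cap K) \to 0$.

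The main technical difficulty, trivial in (a) but delicate in (b) and (c), is to realise $L_k$ as a compact set with connected complement that simultaneously contains the arc $r_k K_{j(k)} \subset \D$ (for Abel universality), the closed sub-disc $\{\abs{z} \le \rho_k\}$ (for the local uniform convergence of the series in $\D$ and the preservation of earlier dilate approximations), and the growth set $F_k$ outside $\overline{\D}$. In (b) and (c), $F_k$ must almost fill an exterior annulus and the unit circle while being perforated finely enough that $\C \setminus L_k$ stays connected. Balancing this topological constraint against the quantitative trade-offs between the growth target on $F_k$, the smallness bound $2^{-k}$, the Mergelyan error on the arc, and (in (c)) the capacity of exceptional subsets of $F_k$ is the heart of the construction.
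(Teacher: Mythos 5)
There is a genuine gap, and it sits exactly at the heart of the problem. Your scheme controls $S_{d_k}(f)=\sum_{i\le k}P_i$ at the chosen degrees $d_k$, but the statement requires $S_n(f)(\zeta)\to\infty$ along \emph{all} $n$, and your only mechanism for the intermediate indices $d_{k-1}<n<d_k$ is ``Cauchy's estimates with $\rho_k$ close to $1$''. This does not work at points of modulus $\ge 1$. First, it is circular: $\rho_k$ must be fixed before Mergelyan's theorem produces $P_k$, so it cannot be chosen ``close to $1$ relative to $d_k=\deg P_k$''. Second, even granting $\rho_k^{-d_k}=O(1)$, the coefficient bounds $|a_{i,k}|\le 2^{-k}\rho_k^{-i}$ only give $|S_n(P_k)(\zeta)|\le 2^{-k+1}n$ for $|\zeta|=1$, which is of size comparable to $k$ (it must be, since $P_k(\zeta_j)\approx k$), so the intermediate partial sums of a single Mergelyan block can swing by $\pm k$ and destroy the accumulated value $\approx k$ of $S_{d_{k-1}}(f)(\zeta_j)$. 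The paper's proof of Theorem \ref{thm-partial-sums-div-a-e-T} is organised entirely around this obstacle: each block is $P_n+Q_n$, where $P_n$ does the Abel-universal approximation and the correction $Q_n=\sum_l b_{l,n}z^l$ is built \emph{one coefficient at a time}, choosing $b_{l,n}$ among $\gtrsim l^2$ well-separated candidates so that the exceptional set $\{|S_l(f)|\le l\}$ has measure $O(l^{-2})$ (then Borel--Cantelli), and analogously via the geometric Lemma \ref{lem1} so that $|S_l(f)(\zeta_j)|\ge l$ at each of finitely many prescribed points. Nothing in your proposal plays this role, and without it the conclusion only holds along the subsequence $(d_k)$. (Part (a) is in fact deduced in the paper from the a.e.\ statement by a rotation argument, see Corollary \ref{thm-count-div-infty}, so a direct construction is not even needed once (b) is available.)

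Parts (b) (exterior) and (c) have additional problems beyond the same intermediate-index issue. For (c), your Chebyshev-type bound $\operatorname{cap}(\{|P|\le M\})=(M/|a|)^{1/\deg P}$ is correct, but to exploit it you need a lower bound on the leading (more precisely, on all sufficiently many top) coefficients of \emph{every} $S_n(f)$; Mergelyan gives no lower bounds on individual Taylor coefficients, and a Swiss-cheese prescription of values on a set of small complementary capacity does not supply them. The paper instead proves Theorem \ref{coef} (an Abel universal $f$ with $|a_k|\ge\gamma_k$ for all large $k$, hence without Hadamard--Ostrowski gaps) by an explicit real-coefficient correction, and then quotes the Kalmes--M\"uller--Nie{\ss} theorem (Theorem \ref{thm-Kalmes}) to get convergence to $\infty$ locally in capacity; the passage from ``no Hadamard--Ostrowski gaps'' to the capacity statement is a nontrivial published result, not a one-line Chebyshev estimate. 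For the exterior a.e.\ statement in (b), the paper avoids any exterior construction altogether: it adds a Wiener-algebra function $g$ (Kahane--Melas) to force $S_n(f+g)\to\infty$ $\lambda$-a.e.\ off $\overline{\D}$, noting that $\UU_A(\D,\rho)$ is stable under addition of elements of $A(\D)$ (Proposition \ref{prop-partial-sums-div-a-e-outside-Dbar}). I would recommend restructuring your argument around a coefficient-by-coefficient correction term before attempting the measure- and capacity-theoretic conclusions.
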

We note that the set of all such functions is of first Baire category and the proofs of (a) and (b) are constructive. For part (c), we show that there exists $f\in\mathcal{U}_A(\D)$ without Hadamard-Ostrowski gaps, which improves the result in \cite{Charpentier2020} about Ostrowski gaps.

Finally, in Section \ref{section-Further}, we discuss similar questions about the functions mentioned in the first paragraph (see Definition \ref{classes}). In addition, we propose some further developments in connection with classical function spaces.

\section{Notation and main definitions}\label{notation}
In this section we introduce the notation and main definitions that will be frequently used throughout this paper. Let us start with the standard ones. First, we will denote by $\N:=\{0,1,2,\ldots\}$ the set of non-negative integers. If $P$ is a polynomial, then $\text{deg}(P)$ will stand for the degree of $P$. The disc and the circle of centre $a$ and radius $r$ will be respectively denoted by $D(a,r)$ and $C(a,r)$. If $f$ is a function defined on $\D$ and $r\in [0,1)$, then $f_r$ will denote the dilate of $f$ defined by $f_r(z)=f(rz)$, $z\in \overline{\D}$. Also, given any $r\in \mathbb{R}$ and any set $A$ in $\mathbb{C}$, we write $rA=\{ rz:z\in A\}$.  For a power series $f(z)=\sum_ka_kz^k$ and $n\in \N$, the notation $S_n(f)$ will be used to denote the $n$-th partial sums of $f$; that is $S_n(f)(z):=\sum_{k=0}^na_kz^k,z\in\C$.

We will use the notation $m$ to denote the arclength measure on the unit circle $\mathbb{T}$ and $\lambda$ for the Lebesgue measure in $\C$ (identified with $\mathbb{R}^2$). For a compact set $K$ in $\mathbb{C}$, we denote by $C(K)$ the Banach space consisting of all functions continuous on $K$, endowed with the supremum norm $\|\cdot\|_K$. The notation $A(K)$ will be used for the\textit{ algebra of $K$}; that is the set of all functions in $C(K)$ that are holomorphic in the interior $K^{\circ}$ of $K$. 

Let us recall the definition of Abel universal functions. When not specifically mentioned, the letter $\rho$ will always stand for an increasing sequence $(r_n)_n$ in $[0,1)$ such that $r_n\to 1$.

\begin{definition}Let $\rho$ be as above. We denote by $\UU_A(\D,\rho)$ the set of all functions $f\in H(\D)$ that satisfy the following property: for any compact set $K\subset \T$, different from $\T$, and any function $\vp \in C(K)$, there exists an increasing sequence $(n_k)$ in $\N$ such that
	\[
	\|f_{r_{n_k}} - \vp\|_K \to 0,\quad k\to \infty.
	\]
\end{definition}

We recall that for any $\rho$ the set $\UU_A(\D,\rho)$ is a dense $G_{\delta}$ subset of $H(\D)$ \cite{Charpentier2020}. Moreover, it is easily seen that the set
\[
\UU_A(\D):=\bigcup_{\rho} \UU_A(\D,\rho),
\]
where the union runs over all increasing sequences $\rho \in [0,1)$ that converges to $1$, coincides with the set of functions $f\in H(\D)$ that satisfy the following property: for any $\e>0$, any compact set $K\subset \T$, different from $\T$, and any function $\vp \in C(K)$, there exists $r\in [0,1)$ such that
\[
\|f_r-\vp\|_K \leq \e.
\]
As in \cite{CharpentierMouze2022}, the terminology \emph{Abel universal function} will refer to the elements of $\UU_A(\D)$.

\medskip

Finally, let us set some technical notations that will be repeatedly used in several proofs.
\begin{itemize}
\item $(\varepsilon_n)_n$ will be a sequence of positive real numbers decreasing to $0$ with $\sum_{n=1}^{\infty}\varepsilon_n\leq 1/2$;
\item $(\vp_n)_n$ will be an enumeration of the polynomials with coefficients in $\Q +i\Q$;
\item $(K_n)_n$ will be a sequence of proper compact arcs in $\T$, such that for any compact set $K\subset \T$, different from $\T$, there exists $n\in \N$ such that $K\subset K_n$;
\item $\alpha,\beta: \N\to \N$ will be two functions such that for any $l,m\in \N$, there exists infinitely many $n\in \N$ for which $(\alpha(n),\beta(n))=(l,m)$.
\end{itemize}

\section{Growth and boundary behaviour of Abel universal functions}\label{Picard}
The purpose of this section is to exhibit properties of Abel universal functions in terms of notions that are classical in function theory: the maximum modulus function, asymptotic values, normality and Picard's property. Our first result will be derived from known results that we shall briefly present together with the necessary definitions.

The maximum modulus function $M(r,f):=\max\{|f(z)|:\,|z|=r\}$, $r\in [0,1)$ and $f\in H(\D)$, is commonly used to describe the growth of holomorphic functions. The next definition is that of asymptotic value, as it was introduced by Valiron \cite{Valiron1917}, see also \cite{MacLane1963}.

\begin{definition}A function $f\in H(\D)$ admits an asymptotic value $\alpha \in \C\cup \{\infty\}$ if there exists a path $\gamma:[0,1) \to \D$ with $|\gamma(r)|\to 1$ as $r\to 1$ such that $f\circ \gamma(r)\to \alpha$ as $r\to 1$.

We say that $f$ admits an asymptotic value $\alpha \in \C\cup \{\infty\}$ \textit{at a point} $\zeta \in \T$ if there exists a path $\gamma:[0,1)\to \D$ with $\gamma(r)\to \zeta$ as $r\to 1$ such that $f\circ \gamma (r)\to \alpha$ as $r\to 1$.
\end{definition}

Interestingly, the existence of asymptotic values for unbounded holomorphic functions can be related to their behaviour along spirals, where by \textit{spiral} we mean a subset of $\D$ of the form $\{r(t)e^{i\theta(t)}:\,t\in (0,+\infty)\}$, with $r(t)\to 1$ and $\theta(t)\to +\infty$ or $-\infty$ as $t\to +\infty$. Indeed, a result by Valiron asserts that an unbounded holomorphic function in $\D$ that is bounded on a spiral must admit $\infty$ as an asymptotic value \cite{Valiron1934}. These functions, that we will refer to as \textit{Valiron functions}, like in \cite{BarthRippon2006}, have been thoroughly studied, for example in \cite{BarthRippon2006,Miller1970,Warren1970}. Note that by \cite[p. 71]{MacLane1963}, there exist functions in $H(\D)$ that are unbounded near each point of $\T$ but that do not admit $\infty$ as an asymptotic value. By Valiron's result, they are of course bounded on no spirals in $\D$.

A link between the rate of growth of the maximum modulus function and the asymptotic values of functions  holomorphic in $\D$ can be made in studying the MacLane class $\AA$ of all functions in $H(\D)$ admitting an asymptotic value at any point of a dense subset of $\T$ \cite{MacLane1963}. This link is specified by the following result:

\begin{theorem}[Hornblower, \cite{Hornblower1971}]\label{Hornblower} If $f\in H(\D)$ satisfies the condition
	\[
	\int_0^1 \log^+\log^+M(r,f) dr<\infty,
	\]
	then $f$ belongs to $\AA$.
\end{theorem}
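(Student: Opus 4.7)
The strategy is by contradiction: assume $f\in H(\D)$ satisfies the integrability hypothesis but $f\notin\AA$. Then there exists an open arc $I\subset\T$ at no point of which $f$ admits an asymptotic value, and the goal is to derive from this a lower bound on $M(r,f)$ that violates the integrability of $\log^+\log^+M(r,f)$.

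First I would pass to the subharmonic function $u:=\log|f|$ and analyze its super-level sets $\Omega_c:=\{z\in\D:u(z)>c\}$. A classical observation of Iversen (foundational to MacLane's framework) is that any \emph{tract} of $f$ -- a decreasing sequence of components of $\Omega_c$ accumulating at $\T$ as $c\to\infty$ -- carries a continuous path $\gamma\colon[0,1)\to\D$ with $|\gamma(t)|\to 1$ and $f(\gamma(t))\to\infty$, thereby producing the asymptotic value $\infty$ at every boundary point of accumulation of $\gamma$. The contradictory hypothesis therefore forces every tract of $f$ to avoid accumulating at any point of $I$. Combining this with the fact that $f$ cannot be bounded in any approach region to a point of $I$ (otherwise a subsequential Lindel\"of-type argument would yield an asymptotic value), one obtains a rigid structural picture: for each large $c$, the component of $\Omega_c$ meeting a neighbourhood of $I$ must curl back into the disc before reaching $\T$.

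Second, I would convert this topological obstruction into a quantitative growth estimate. The complement $\{u<c\}$ then contains thin simply connected strips that approach $I$, and applying an Ahlfors distortion / extremal length estimate to these strips produces a doubly-exponential lower bound of the form $M(r,f)\geq\exp\exp\bigl(c_0/(1-r)\bigr)$ along a sequence $r_n\to 1^-$. Such a bound makes $\log^+\log^+ M(r,f)\gtrsim 1/(1-r)$ near $r=1$, contradicting the integrability assumption.

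The principal obstacle lies in the quantitative second step. Translating ``no tract reaches $I$'' into a sharp doubly-exponential lower bound requires careful bookkeeping of the number and hyperbolic widths of the components of $\Omega_c$ across a dyadic scale of levels $c=2^k$, together with an application of the Ahlfors distortion theorem on each strip. Because $\log^+\log^+$ is the sharp growth threshold -- as MacLane had already exhibited functions slightly above this rate that fail to be in $\AA$ -- the estimates at every stage must be essentially optimal, and any looseness in either the level-set topology or the potential-theoretic estimate would weaken the conclusion below Hornblower's integrability threshold.
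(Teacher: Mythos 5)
You should first note that the paper offers no proof of this statement: it is quoted verbatim as Hornblower's theorem from \cite{Hornblower1971} and used as a black box, so there is no internal argument to compare yours against. Judged on its own, your outline has the right general flavour --- Hornblower's proof does proceed through the level sets and tracts of $\log|f|$ and harmonic-measure/distortion estimates in the spirit of MacLane --- but it contains a step that is not merely unfinished but false as stated. You claim that the absence of asymptotic values on an arc $I$ forces $M(r,f)\geq\exp\exp\bigl(c_0/(1-r)\bigr)$ along a sequence $r_n\to 1$. By monotonicity of $M(\cdot,f)$ this would indeed contradict integrability of $\log^+\log^+M(r,f)$ (the tail integrals over $[r_n,1)$ would stay bounded below by $c_0$), but such a pointwise doubly-exponential bound cannot be a consequence of $f\notin\AA$: as you yourself recall, the $\log^+\log^+$ condition is essentially sharp, so there exist functions outside $\AA$ whose growth exceeds the integrability threshold by an arbitrarily small margin and in particular satisfies $\log\log M(r,f)=o\bigl(1/(1-r)\bigr)$. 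For such functions your intermediate estimate fails for every sequence $r_n\to1$, so no correct argument can produce it. The genuine conclusion one must extract from the level-set topology is exactly the divergence of $\int_0^1\log^+\log^+M(r,f)\,dr$ --- obtained by summing an Ahlfors--Carleman type differential inequality for the widths of the level-set components over all levels and all radii --- and nothing stronger pointwise.

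Beyond this, the decisive quantitative step is explicitly deferred (``the principal obstacle lies in the quantitative second step''), and the topological preliminaries are also looser than they can afford to be: a tract need not end at a single point of $\T$ (its end can be a nondegenerate subarc, which is precisely the case one must rule out or exploit), and boundedness of $f$ on a thin approach region to $\zeta\in I$ does not by itself yield an asymptotic value at $\zeta$ without an additional normal-families or Fatou-type argument. As it stands the proposal is a plausible road map rather than a proof, and the one concrete quantitative claim it commits to is incompatible with the sharpness of the theorem it is trying to prove.
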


It is not difficult to see that Valiron functions cannot belong to the MacLane class $\AA$ \cite{BarthRippon2006}. Therefore, if $f\in H(\D)$ is a Valiron function, then
\[
\int_0^1 \log^+\log^+M(r,f) dr=\infty.
\]

To state our results we need to introduce some definitions.

\begin{definition}{\rm Let $f$ be holomorphic in $\D$. Then $f$ is said to be \textit{normal} if
\[
\sup _{z\in \D} (1-|z|^2)f^{\#}(z)<\infty,
\]
where $f^{\#}$ denotes the spherical derivative of $f$, defined by $f^{\#}(z)=\dfrac{|f'(z)|}{1+|f(z)|^2}$.
}
\end{definition}

The term \emph{normal} comes from Marty's normality criterion \cite[Theorem 17, p. 226]{Ahlforsbook} that asserts that $f\in H(\D)$ is normal if and only if the family $\{f\circ \vp:\, \vp\text{ automorphism of }\D\text{ onto itself}\}$ is normal. We refer to \cite[Section 9.1]{Pommerenke-univalent} for some classical results about normal functions. By a conformal mapping, this definition can be extended to any simply connected domain. More generally, we will say that a function $f\in H(\D)$ is normal at no boundary point $\zeta \in \T$ if $f$ is normal in no domain of the form $D(\zeta,r)\cap\D$, $0<r<1$, $\zeta \in \T$; that is
\[
\sup _{D(\zeta,r)\cap\D} (1-|z|^2)f^{\#}(z)=\infty,\quad 0< r <1,\,\zeta \in \T.
\]
By Montel's theorem, a holomorphic function in a domain $D$ that omits two values in $\C$, is normal in $D$. Therefore, a function $f$ that is normal at no boundary point $\zeta \in \T$ will omit at most one value in $\C$ in each set of the form  $D(\zeta,r)\cap\D$, $0<r<1$. This leads to the following definition:

\begin{definition}{\rm
		Let $f$ be a holomorphic function in $\mathbb{D}$ and $\zeta$ in $\mathbb{T}$. We say that $\zeta$ is a \textit{Picard point} of $f$ if, for each $r>0$, the set $f(D(\zeta, r)\cap \mathbb{D})$ is the whole complex plane $\mathbb{C}$ except at most one point.
	}
\end{definition}

Observe that if $\zeta\in\T$ is a Picard point of $f$ then the following seemingly stronger condition holds: $f$ assumes every complex value except possibly one \textit{infinitely often} in $D(\zeta, r)\cap \mathbb{D}$ for every $r>0$.

In \cite{BarthRippon2006} the authors remark that a Valiron function is normal at no point $\zeta \in \T$ and thus that every point in $\T$ is a Picard point of such a function. This also implies that any Valiron function that does not vanish in $\D$ admits $0$ as an asymptotic value, see \cite[p. 817]{BarthRippon2006}. The quite nice Theorems 1.2 and 1.3 in \cite{BarthRippon2006} allow us to infer that a large category of unbounded functions in $H(\D)$ enjoys a wild boundary behaviour with respect to the notions introduced above. For a subset $E$ of $\D$, we denote by $E_{NT}$ the set of all $\zeta\in\T$ with the property that there exists a sequence $(z_n)_n$ in $E$ with $z_n\to\zeta$ nontangentially.

\begin{theorem}[Barth and Rippon, \cite{BarthRippon2006}]\label{thm-BarthRippon-cons}Let $f \in H(\D)$ be bounded on a non-empty subset $E$ of $\D$. Then:
	\begin{enumerate}[(a)]
		\item if $f$ is unbounded in $\D$ and $m(\T\setminus E_{NT})=0$, then $f$ admits $\infty$ as an asymptotic value;
		\item if $f$ is unbounded near each point of $\T$ and $\T\setminus E_{NT}$ is nowhere dense in $\T$, then
		\begin{enumerate}[(i)]
			\item $f$ is normal at no point of $\T$;
			\item every point of $\T$ is a Picard point for $f$.
		\end{enumerate}
	\end{enumerate}
\end{theorem}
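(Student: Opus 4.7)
The plan is to handle the three statements so that (b)(ii) follows from (b)(i) by a Montel argument, while (a) and (b)(i) both rest on a careful analysis of the super-level sets $\Omega_R:=\{z\in\D:|f(z)|>R\}$ combined with classical boundary results for normal functions.

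Start with (b)(ii), which is the easiest. Fix $\zeta\in\T$ and $r>0$; if $f(D(\zeta,r)\cap\D)$ were to omit two distinct points of $\C$, then by Montel's three-point theorem the family $\{f\circ\varphi:\varphi\text{ an automorphism of }D(\zeta,r)\cap\D\}$ would be normal, which via Marty's criterion and a conformal transplant forces $f$ itself to be normal at $\zeta$, contradicting (b)(i). So once (b)(i) is established, (b)(ii) is immediate.

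For (b)(i) I would argue by contradiction: suppose $f$ is normal in $D(\zeta_0,r)\cap\D$ for some $\zeta_0\in\T$ and $r>0$, so that $(1-|z|^2)f^{\#}(z)$ is uniformly bounded there. Since $\T\setminus E_{NT}$ is nowhere dense, the arc $\T\cap D(\zeta_0,r)$ contains a dense subset of $E_{NT}$, so $f$ is bounded along some nontangential sequence at a dense set of points of this arc. The Lehto--Virtanen-type theorem for normal functions then furnishes a finite nontangential limit at each such point; a Schwarz--Pick/oscillation estimate that exploits the uniform bound on $(1-|z|^2)f^{\#}(z)$ upgrades this pointwise boundedness to a local uniform bound for $f$ on a smaller boundary neighbourhood of $\zeta_0$, contradicting the hypothesis that $f$ is unbounded near $\zeta_0$.

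For (a), note first that the maximum modulus principle forbids any relatively compact component of $\Omega_R$ (on such a component $U$ one would have $|f|=R$ on $\partial U$, yet $|f|>R$ inside). Hence, for each $R$, every component of $\Omega_R$ has closure meeting $\T$. Choosing a point $z_0$ with $|f(z_0)|$ very large and then following a nested decreasing sequence $U_1\supset U_2\supset\cdots$ of such components with $U_n\subset\Omega_n$, a prime-end/continuum compactness argument produces a path $\gamma:[0,1)\to\D$ with $|\gamma(t)|\to 1$ along which $|f|\to\infty$. The measure-theoretic hypothesis $m(\T\setminus E_{NT})=0$ is used to guarantee that the asymptotic value produced really is $\infty$ and not a finite value: any other finite asymptotic value would, via a Plessner/Privalov-type statement, force the nontangential cluster set of $f$ to contain that value on a set of positive measure in $\T$, contradicting the boundedness of $f$ along nontangential sequences through almost every $\zeta\in\T$. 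The main obstacle is plainly this last step in (a): turning the nested family of level components into an honest path, and ruling out finite asymptotic values by means of the full-measure hypothesis on $E_{NT}$ rather than only a sequential statement.
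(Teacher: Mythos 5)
First, a point of comparison: the paper does not prove this statement at all. It is quoted from Barth and Rippon (Theorems 1.2 and 1.3 of \cite{BarthRippon2006}) and used as a black box to derive Corollary \ref{coro-Abel-univ-funct-all-prop}, so your attempt cannot be matched against an argument in the text and must be judged on its own. Your reduction of (b)(ii) to (b)(i) via Montel and Marty is correct, and is in fact the observation the authors themselves record just before defining Picard points. The other two parts have genuine gaps.

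The serious one is in (a). As written, your tract argument never uses the hypothesis $m(\T\setminus E_{NT})=0$ to build the path: you claim that the maximum principle plus a nested sequence of components $U_n\subset\Omega_n$ already yields a path along which $|f|\to\infty$, and you reserve the measure hypothesis for ``ruling out finite asymptotic values''. Both halves are off. If a path with $|f(\gamma(t))|\to\infty$ existed, there would be nothing left to rule out: $\infty$ is then an asymptotic value by definition, and $|\gamma(t)|\to 1$ holds automatically because $f$ is locally bounded in $\D$; the Plessner-type step is vacuous. And the nested sequence cannot in general be constructed: to pass from a component $U_n$ of $\Omega_n$ to a component of $\Omega_{n+1}$ contained in it you need $f$ to be unbounded on $U_n$, and for a general unbounded $f\in H(\D)$ every component of every $\Omega_R$ may carry a bounded restriction of $f$, the unboundedness being realised only across infinitely many components. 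If your argument were complete it would show that \emph{every} unbounded function in $H(\D)$ admits $\infty$ as an asymptotic value, which is false --- the paper itself cites MacLane's example of a function unbounded near every point of $\T$ with no asymptotic value $\infty$. The entire content of Barth and Rippon's theorem is that the full-measure nontangential boundedness forces the existence of a suitable unbounded tract; that is precisely the step you have not supplied.

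In (b)(i) the skeleton is plausible but the key ``upgrade'' is unjustified. From normality on $D(\zeta_0,r)\cap\D$ and $|f|\le M$ on $E$, the spherical-derivative (Schwarz--Pick) estimate bounds $|f|$ only on hyperbolic neighbourhoods of fixed radius of the points of $E$. Since $E$ may be hyperbolically very sparse (say, one thin nontangential sequence for each point of a countable dense subset of the arc), these neighbourhoods need not cover any set of the form $D(\zeta_0,r')\cap\D$, so no local uniform bound on $|f|$ follows and no contradiction with ``$f$ unbounded near $\zeta_0$'' is reached. Some further input --- a two-constants/harmonic-measure argument, or the Lehto--Virtanen theorem applied to genuine asymptotic values rather than to boundedness along sequences --- is needed at this point, and the proposal does not identify it.
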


This theorem is very helpful for our purpose, which is to describe the boundary behaviour of Abel universal functions. Indeed, it is easily seen, by the definition, that for any $f\in \UU_A(\D,\rho)$, there exists $E\subset \D$ on which $f$ is bounded and such that $E_{NT}=\T$. In addition, it is clear that Abel universal functions cannot belong to the MacLane class $\AA$. Thus, by combining the above, we can deduce the following corollary:

\begin{cor}\label{coro-Abel-univ-funct-all-prop} Any function $f\in \UU_A(\D,\rho)$ satisfies the following properties:
\begin{enumerate}[(a)]
	\item %\label{eq-growth-MacLane}
	$\int_0^1 \log^+\log^+M(r,f)dr=\infty$;
	\item %\label{asymptotic}
	$f$ admits $\infty$ as an asymptotic value;
	\item %\label{normality}
	$f$ is normal at no point of $\T$;
	\item %\label{Picart-point-prop}
	every point of $\T$ is a Picard point for $f$.
\end{enumerate}
\end{cor}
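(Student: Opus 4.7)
The plan is to apply Theorem \ref{thm-BarthRippon-cons} (Barth--Rippon) to obtain (b), (c), (d), and the contrapositive of Theorem \ref{Hornblower} (Hornblower) to obtain (a). Both parts of the argument rely on a common construction: a set $E\subset\D$ on which $f$ is bounded with $E_{NT}=\T$, together with the fact that $f$ is unbounded near every point of $\T$.

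For the construction of $E$, I fix two distinct points $\zeta^{(1)},\zeta^{(2)}\in\T$ and pick a sequence of proper compact arcs $(K_n)_n$ in $\T$ that \emph{alternately} omit small open arcs around $\zeta^{(1)}$ (for $n$ odd) and around $\zeta^{(2)}$ (for $n$ even), with omitted lengths tending to $0$. Applying the Abel universality of $f$ to the zero function on each $K_n$ and extracting an increasing subsequence of $\rho$ gives $r_n\nearrow 1$ with $\sup_{\zeta\in K_n}|f(r_n\zeta)|\le 1$. Setting $E:=\bigcup_n r_nK_n$, we have $|f|\le 1$ on $E$; and for every $\zeta\in\T$, since $\zeta$ differs from at least one of $\zeta^{(1)},\zeta^{(2)}$, the point $\zeta$ lies in $K_n$ for infinitely many $n$, so the radial sequence $(r_n\zeta)$ meets $E$ infinitely often and converges nontangentially to $\zeta$; hence $E_{NT}=\T$.

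The unboundedness of $f$ near an arbitrary $\zeta_0\in\T$ is verified as follows: pick $r>0$ and a proper compact arc $K\ni\zeta_0$ inside $\T\cap D(\zeta_0,r/2)$; for arbitrary $M>0$, Abel universality applied to $\varphi\equiv M$ on $K$ supplies $s\in\rho$ with $1-s<r/2$ and $\|f_s-M\|_K\le 1$, whence $|f|\ge M-1$ on $sK\subset D(\zeta_0,r)\cap\D$. Letting $M\to\infty$ shows $f$ is unbounded in $D(\zeta_0,r)\cap\D$ (and a fortiori in $\D$). Combining the two ingredients, Theorem \ref{thm-BarthRippon-cons}(a) gives (b) (using $m(\T\setminus E_{NT})=0$), and parts (b)(i), (b)(ii) give (c), (d) (using that $\T\setminus E_{NT}$ is nowhere dense and $f$ is unbounded near every boundary point).

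For (a), I would invoke the excerpt's remark that an Abel universal function cannot belong to the MacLane class $\AA$: concretely, the alternating structure of the $K_n$ allows one to splice the arcs $r_nK_n$ by short radial bridges into a genuine spiral in $\D$ on which $|f|$ stays bounded, making $f$ a Valiron function in the sense of \cite{BarthRippon2006}; the contrapositive of Theorem \ref{Hornblower} then delivers $\int_0^1\log^+\log^+M(r,f)\,dr=\infty$, proving (a). The main technical obstacle lies in this last spiral construction---specifically in controlling $|f|$ on the radial bridges joining consecutive arcs $r_nK_n$ and $r_{n+1}K_{n+1}$---which can be handled by choosing $r_n$ with $1-r_n$ shrinking fast enough, or alternatively bypassed by invoking directly the relevant Valiron--MacLane implication from \cite{BarthRippon2006}.
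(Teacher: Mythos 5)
Your treatment of (b), (c), (d) is correct and is essentially the paper's argument: the construction of a set $E$ with $|f|\le 1$ on $E$ and $E_{NT}=\T$ (via arcs alternately omitting shrinking neighbourhoods of two fixed boundary points), the verification that $f$ is unbounded near every point of $\T$, and the application of Theorem \ref{thm-BarthRippon-cons} are exactly what the paper has in mind when it writes that ``it is easily seen, by the definition, that \dots there exists $E\subset \D$ on which $f$ is bounded and such that $E_{NT}=\T$''.

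The proof of (a), however, has a genuine gap. You propose to show that $f$ is a Valiron function by splicing the arcs $r_nK_n$ into a spiral on which $|f|$ stays bounded, and you correctly flag the radial bridges as the obstacle --- but the suggested fixes do not work. Controlling $|f|$ on a segment $[r_n\zeta, r_{n+1}\zeta]$ is not a matter of choosing $1-r_n$ small: since $f\in\UU_A(\D,\rho)$ has maximal radial cluster set at $\zeta$, the radii where $|f(r\zeta)|$ is large cluster at $1$, and $r_{n+1}$ is forced on you by the universality requirement on $K_{n+1}$, so you cannot guarantee that the bridge avoids such radii. In fact, whether every Abel universal function is a Valiron function is posed as an \emph{open question} at the end of Section \ref{Picard} (question (iii)), so this route cannot be completed with the tools at hand; and ``invoking directly the Valiron--MacLane implication'' does not help, since that implication still presupposes that $f$ is Valiron. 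The intended argument for (a) is much simpler: since $\UU_A(\D)\subset \UU_{\Gamma}(\D)$ (for any path $\gamma$ ending at $\zeta\in\T$ and any $w\in\C$, the points where $\gamma$ meets the circles $|z|=r$ have arguments converging to $\zeta$, so universal approximation of the constant $w$ on small arcs around $\zeta$ puts $w$ in $C_{\gamma}(f)$), the function $f$ has maximal cluster set along every path ending at a boundary point, hence admits no asymptotic value at any point of $\T$, hence $f\notin\AA$; the contrapositive of Theorem \ref{Hornblower} then yields $\int_0^1\log^+\log^+M(r,f)\,dr=\infty$.
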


\begin{remark}{\rm (i) Another proof of part (a) can be derived from ideas originally contained in \cite{MelasNestoridisPapadoperakis1997,Melas2000}, see \cite[Corollary 2.10]{Charpentier2020}.
		
(ii) Part (d) tells us that for any $\zeta\in\mathbb{T}$ and any $0<r<1$, the set  $f(D(\zeta, r)\cap \mathbb{D})$ is the whole complex plane $\mathbb{C}$ except at most one point. This result is optimal in the sense that for any $a\in \C$ there exists $f\in \UU_A(\D,\rho)$ such that $a\notin f(\D)$. This comes from the following result in \cite{CharpManoMaron}: $e^f$ is Abel universal whenever $f$ is, hence there are Abel universal functions that do not vanish in $\D$. It is obvious that, for any $a\in \C$, $f+a$ is Abel universal if $f$ is.

(iii) For any $a\in \C$, there exists $f\in \UU_A(\D,\rho)$ for which $a$ is an asymptotic value. This is a consequence of the fact that for any function $f\in \UU_A(\D,\rho)$ that does not vanish in $\D$, and for any $a\in \C$, the function $a+1/f$ is always Abel universal (see \cite{CharpManoMaron}).

(iv) Properties (a), (b) and (d) are also valid for the class of universal Taylor series. }
\end{remark}

Finally, we can use \cite{GardinerManolaki2021} to show that Abel universal functions have chaotic angular behaviour. Let us recall that the Stolz angle $S_{\zeta}(\alpha)$ with vertex $\zeta \in \T$ and opening $\alpha>1$ is defined by
\[
S_{\zeta}(\alpha):=\set*{z\in \D:\,|z-\zeta|<\alpha(1-|z|)}.
\]
In the next result we use $\lambda_3$ to denote the Lebesgue measure on $\R^3$.
\begin{cor} \label{length}
	Let $f\in \UU_{A}(\mathbb{D},\rho)$. For $m$-a.e. point $\zeta \in \T$ and for every Stolz angle $S=S_{\zeta}(\alpha)$ with vertex $\zeta $,
	\begin{itemize}
		\item[(a)] for $\lambda _{3}$-almost every $(w,r)\in \mathbb{C}\times (0,\infty)$,
		\begin{equation*}
		\int_{S\cap f^{-1}(C(w,r))}\left\vert f^{\prime }(z)\right\vert \left\vert
		dz\right\vert =\infty ;
		\end{equation*}
		\item[(b)] for every $(w,r)\in \mathbb{C}\times (0,\infty)$,
		\begin{equation*}
		\int_{S\cap f^{-1}(D(w,r))}\left\vert f^{\prime
		}\right\vert ^{2}d\lambda=\infty .
		\end{equation*}
	\end{itemize}
\end{cor}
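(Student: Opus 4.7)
The plan is to invoke a Plessner-type theorem of Gardiner--Manolaki \cite{GardinerManolaki2021} which attaches the integral conclusions (a) and (b) to the ``chaotic'' alternative in a refined Plessner dichotomy: namely, for any $f \in H(\D)$, at $m$-a.e.\ $\zeta \in \T$ one of the following holds---either $f$ admits a finite angular limit at $\zeta$, or for every Stolz angle $S_\zeta(\alpha)$ the integrals in (a) and (b) diverge with the prescribed quantifications over $(w,r) \in \C \times (0,\infty)$. The task thus reduces to ruling out the ``nice'' alternative for $f \in \UU_A(\D,\rho)$ on a set of full $m$-measure in $\T$.

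To verify this, I would first show that at every $\zeta \in \T$ the radial cluster set of $f$ is all of $\C$. Given $w \in \C$, pick any proper compact arc $K \subset \T$ with $\zeta \in K$ and apply the defining property of Abel universality to the constant function $\vp \equiv w$ on $K$: this yields a sequence $(r_n)$ in $[0,1)$ tending to $1$ such that $f(r_n \zeta) \to w$. Since $w$ was arbitrary, the radial cluster set of $f$ at $\zeta$ equals $\C$, which is incompatible with the existence of any finite angular limit of $f$ at $\zeta$. Consequently the ``nice'' alternative in the Gardiner--Manolaki theorem fails at \emph{every} $\zeta \in \T$, and so the ``chaotic'' alternative holds $m$-a.e., which is precisely the content of (a) and (b).

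The main obstacle is simply matching the hypotheses of \cite{GardinerManolaki2021} to the condition we have verified; the radial-cluster-set computation itself is immediate from the very definition of $\UU_A(\D,\rho)$, and no extra input from the geometry of Stolz angles, from harmonic measure, or from Dirichlet integrals is required on our side. In particular, if the Gardiner--Manolaki theorem is phrased in terms of the non-tangential cluster set being equal to $\C$ rather than the mere absence of a finite angular limit, the conclusion is still immediate, since the full radial cluster set already forces the Stolz-angle cluster set to be $\C$ (every Stolz angle at $\zeta$ contains a tail of the radius $[0,\zeta)$), and unboundedness in each Stolz angle also follows from Corollary \ref{coro-Abel-univ-funct-all-prop}(b).
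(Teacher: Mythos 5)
Your proposal is correct and follows essentially the same route as the paper: both reduce the statement to the fact that an Abel universal function has no finite nontangential limit at any point of $\T$ (which you rightly deduce from the maximality of the radial cluster sets) and then invoke the Plessner-type theorem of \cite{GardinerManolaki2021}. The only cosmetic difference is that the paper obtains (b) from (a) via the co-area formula rather than reading both conclusions directly off the cited theorem, but this does not change the substance of the argument.
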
	

The integral in Corollary \ref{length} (a) measures the total arc length of the image of $S\cap f^{-1}(C(w,r))$ under $f$, taking into account multiplicities. Thus, although this image is contained in the circle $C(w,r)$, this integral condition tells us that its length, counting multiplicities, is infinite for almost every choice of $(w,r)\in \mathbb{C}\times (0,\infty)$. Similarly, the integral condition in Corollary \ref{length} (b) tells us that the total area of the image of $S\cap f^{-1}(D(w,r))$ under $f$, counting multiplicities, is infinite for every choice of $(w,r)\in \mathbb{C}\times (0,\infty)$.
	
\begin{proof}[Proof of Corollary \ref{length}] Part (a) follows immediately from combining the fact that Abel universal functions do not have nontangential limit at any point of $\mathbb{T}$ together with \cite[Theorem 1]{GardinerManolaki2021}. Part (b) follows from part (a) and the co-area formula (see p. 3 in \cite{GardinerManolaki2021}).
\end{proof}

\medskip

It is natural to ask whether an analogue of condition (a) in Corollary \ref{coro-Abel-univ-funct-all-prop} would hold if we restrict to a smaller boundary region near a point in $\T$. For a set $A\subset \D$ such that $\bar{A}\cap \T \neq \emptyset$, let $M_A(f,r):=\sup\{|f(z)|:\,z\in A,\, |z|=r\}$ be the maximum modulus function of $f$ restricted to $A$. Condition (a) in Corollary \ref{coro-Abel-univ-funct-all-prop} tells us that $M_{\D}(f,r)$ grows rather fast to $\infty$ as $r\to 1$ for any Abel universal function $f$. One can expect that the smaller $\bar{A}\cap \T$ is, the weaker the constraints are on the rate of growth of $M_A(f,r)$. The next two results give some interesting information in this direction. The first one shows that, given any set $A\subset \D$ such that $\overline{A}\cap \T$ is a singleton, there exists an Abel universal function whose maximum modulus function restricted to $A$ has an arbitrary rate of growth.

\begin{theorem}\label{thm-growth-restricted}Let $w:[0,1)\to [1,\infty)$ be continuous and increasing such that $w(r)\to \infty$ as $r\to 1$. For any $A\subset \D$ with $\overline{A}\cap \T$ containing exactly one point, there exists $f\in \UU_A(\D,\rho)$ such that $|f(z)|\leq w(|z|)$ for any $z\in A$.
\end{theorem}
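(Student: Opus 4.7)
My plan is to construct $f$ inductively as a series $f=\sum_{n\ge 1}P_n$ of polynomials, adapting the standard Mergelyan-based construction of Abel universal functions in \cite{Charpentier2020} by adjoining an extra ``growth'' component to the approximation target at each step. After a rotation we may assume $\overline{A}\cap\T=\{1\}$. The key structural input is that the hypothesis forces, for every $s\in(0,1)$, the inclusion $A\cap\{|z|>s\}\subset D(1,\delta(s))$ with $\delta(s)\to 0$ as $s\to 1$. Thus the portion of $A$ on which the growth restriction is non-trivial is confined to arbitrarily small neighbourhoods of~$1$, precisely where $w(|z|)$ is arbitrarily large; exploiting this match between the location of the ``bad'' points of $A$ and the growth budget of $w$ is the crux of the argument.

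I will choose $0=t_0<t_1<t_2<\cdots$ tending to~$1$ together with dilates $r_n\in(t_{n-1},t_n)$, fixing $t_{n-1}$ at the start of step~$n$ so close to~$1$ that
\[
 w\bigl(1-\delta(t_{n-1})\bigr)\ge 2\bigl(\|\varphi_{\beta(n)}\|_{\T}+\|S_{n-1}\|_{\overline{\D}}+1\bigr),
\]
which is possible because $w(r)\to\infty$ and the right-hand side depends only on previously selected data. With $S_{n-1}=P_1+\cdots+P_{n-1}$ fixed, I will then apply Mergelyan's theorem on
\[
 E_n=\overline{D(0,t_{n-1})}\,\cup\, r_n K_{\alpha(n)}\,\cup\, L_n,\qquad L_n=\overline{D(1,\delta(t_{n-1}))}\cap\{|z|\ge t_{n-1}+\xi_n\}\cap\overline{\D},
\]
with $\xi_n>0$ chosen small enough that $L_n\cap\overline{D(0,t_{n-1})}=\emptyset$, to approximate within~$\varepsilon_n$ the continuous target $T_n-S_{n-1}$, where $T_n\equiv 0$ on the inner disc and $T_n(z)=\varphi_{\beta(n)}(z/|z|)$ on $r_n K_{\alpha(n)}\cup L_n$ (using the polynomial extension of $\varphi_{\beta(n)}$). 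The polynomial $P_n$ then simultaneously satisfies $|P_n|<\varepsilon_n$ on $\overline{D(0,t_{n-1})}$, the Abel universality condition $\|S_n(r_n\,\cdot\,)-\varphi_{\beta(n)}\|_{K_{\alpha(n)}}<\varepsilon_n$, and the \emph{a priori} bound $|S_n|\le|T_n|+\varepsilon_n\le w(|z|)/2$ on~$L_n$.

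The main technical obstacle is keeping $T_n$ continuous across $E_n$ in the delicate case $1\in K_{\alpha(n)}$, because the arc $r_n K_{\alpha(n)}$ then approaches~$1$ and need not be disjoint from $L_n$. The specific choice $T_n(z)=\varphi_{\beta(n)}(z/|z|)$ on $r_n K_{\alpha(n)}\cup L_n$ is tailored to overcome this: on the arc one has $|z|=r_n$, so $T_n$ reduces to $\varphi_{\beta(n)}(z/r_n)$, which is exactly the value required for Abel universality; on any overlap of the arc with $L_n$ the two prescriptions coincide by construction, so $T_n$ is continuous. The thin annulus of $A$-points with $|z|\in(t_{n-1},t_{n-1}+\xi_n)$ that $L_n$ misses is harmless: a Cauchy-estimate bound for $P_n$ propagating the inner-disc control, combined with the boundedness of $S_{n-1}$ and with $w(|z|)\ge w(t_{n-1})$ there, gives $|S_n|\le w(|z|)/2$ on this slice for $\xi_n$ small enough. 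It then remains to verify: that $E_n$ has connected complement (immediate from its geometry as a disc, an arc and a lens); that $f=\sum_n P_n$ converges locally uniformly in~$\D$ (since $|P_n|<\varepsilon_n$ on the exhausting sequence $\overline{D(0,t_{n-1})}$ and $\sum\varepsilon_n<1/2$); that $f\in\UU_A(\D,\rho)$ for $\rho=(r_n)$ (from the approximation property and the surjectivity of $(\alpha,\beta)$); and that for every $z\in A$, locating the unique $m$ with $t_{m-1}<|z|\le t_m$ and combining $|S_m(z)|\le w(|z|)/2$ on $L_m$ with $|f(z)-S_m(z)|\le\sum_{k>m}\varepsilon_k<1/2\le w(|z|)/2$ yields $|f(z)|\le w(|z|)$.
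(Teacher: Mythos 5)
Your overall strategy --- building $f=\sum_n P_n$ by Mergelyan approximation on a compact set that contains both a dilated arc (for universality) and the far part of $A$ (for the growth bound) --- is a reasonable alternative to the paper's argument, but the central step fails as written. Mergelyan's theorem requires the target function to lie in $A(E_n)$, i.e.\ to be holomorphic in the interior of $E_n$. Your lens $L_n=\overline{D(1,\delta(t_{n-1}))}\cap\{|z|\ge t_{n-1}+\xi_n\}\cap\overline{\D}$ has nonempty interior, and on it you prescribe $T_n(z)=\varphi_{\beta(n)}(z/|z|)$, which is not holomorphic there unless $\varphi_{\beta(n)}$ is constant, since $z\mapsto z/|z|$ is nowhere holomorphic. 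So the approximation you invoke is simply not available. This is not cosmetic: the choice $\varphi_{\beta(n)}(z/|z|)$ is precisely the device you use to glue the prescription on the arc $r_nK_{\alpha(n)}$ to the one on $L_n$, so the error sits at the heart of the construction. It is repairable --- prescribing the polynomial $\varphi_{\beta(n)}(z/r_n)$ on both the arc and the lens restores holomorphy, still restricts to the required values on the arc, and only changes the constant in your requirement on $w(1-\delta(t_{n-1}))$ --- but as written the proof does not go through. A secondary problem is the order of choices: the Cauchy--Bernstein estimate you need on the annulus $t_{n-1}<|z|<t_{n-1}+\xi_n$ requires $\xi_n$ small in terms of $\deg P_n$, yet $P_n$ is produced by Mergelyan on a set whose very definition already involves $\xi_n$; this circularity also has to be broken.

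For comparison, the paper's proof avoids both difficulties by a different device. It applies Mergelyan on $\overline{A}\cup r\overline{\D}\cup K$ with $K$ an arc of $\T$ itself (so the interior of the compact set lies in $\overline{A}\cup r\overline{\D}$, where the target is taken to be a \emph{constant}, hence trivially holomorphic), and then multiplies the resulting polynomial $P_n^*$ by the monomial $z^{u_n}$. For $u_n$ large, $|z|^{u_n}$ is negligible except where $|z|$ is so close to $1$ that $w(|z|)$ is huge, which gives $|P_n(z)|\le 2\e_n w(|z|)$ on all of $A$ with no case analysis near $\T$ and no lens; the passage from $K$ to the dilated arc is then handled by uniform continuity. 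Your observation that $A\cap\{|z|>s\}$ is trapped in $D(1,\delta(s))$ is the same geometric fact the paper exploits, but the monomial factor converts it into the growth estimate far more cheaply than the lens-plus-Mergelyan route.
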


\begin{proof} Let $(\varepsilon _n)_{n}$,  $(\vp _n)_n$, $(K_n)$, $\alpha$ and $\beta$ be as in Section \ref{notation}.
Without loss of generality, we may and shall assume that $\overline{A}\cap \T=\{1\}$ and that for any $0\leq r <1$, the set $r\overline{\D}\cup \overline{A}$ is compact and has connected complement. Then for any closed arc $I$ in $\T$, the set $r\overline{\D}\cup \overline{A}\cup I$ is also compact and has connected complement.
	
	We define by induction a sequence $(P_n)_n$ of polynomials and two increasing sequences $(u_n)_n$ and $(v_n)_n$ in $\N$, as follows. We set $P_0\equiv0$ and $u_0=v_0=0$. For the inductive step, let us assume that $(u_0,v_0,P_0),\ldots,(u_{n-1},v_{n-1},P_{n-1})$ have been built. Since $w(r)\to \infty$ as $r\to 1$ and $w\geq1$, we can choose $u_{n}>u_{n-1}$ such that
	\begin{equation}\label{eqone}
	\left\vert z^{u_{n}} \left(\vp_{\alpha(n)}(1)-\sum_{k=0}^{n-1}P_k(1)\right) \right\vert \leq \varepsilon_{n}w(|z|),\quad z\in A,
	\end{equation}
	and
	\begin{equation}\label{eqtwo}
	\left\vert z^{u_{n}} \left(\vp_{\alpha(n)}(1)-\sum_{k=0}^{n-1}P_k(1)\right) \right\vert \leq \varepsilon_{n},\quad z\in r_{{v_{n-1}}}\overline{\D}.
	\end{equation}
	Then we set
	\[
	q_{n}(z):=\left\{\begin{array}{lll}
	z^{-u_{n}} \left(\vp_{\alpha(n)}(z)-\sum_{k=0}^{n-1}P_k(z)\right) & \text{if }z\in K_{\beta(n)}\\
	\vp_{\alpha(n)}(1)-\sum_{k=0}^{n-1}P_k(1) & \text{if }z\in \overline{A}\cup r_{{v_{n-1}}}\overline{\D}.
	\end{array}\right.
	\]
	\begin{comment}By Tietze's extension theorem, there exists a continuous extension of $q_{n}$ to $\overline{\D}$, which we shall also denote by $q_{n}$.\end{comment}
	From the hypothesis on $A$, we can apply  Mergelyan's theorem to choose a polynomial $P_{n}^*$ such that for any $z\in \overline{A}\cup r_{v_{n-1}}\overline{\D}\cup K_{\beta(n)}$,
	\begin{equation}\label{eqmerg}
	\left\vert P_{n}^*(z)-q_{n}(z) \right \vert \leq \e_{n}.
	\end{equation}
	By uniform continuity of $P_{n}^*(z)-z^{-u_{n}} \left(\vp_{\alpha(n)}(z)-\sum_{k=0}^{n-1}P_k(z)\right)$, we may choose $v_{n}> v_{n-1}$ such that for any $z\in K_{\beta(n)}$,
	\begin{multline}\label{unifqn}
	\Bigg|P_{n}^*(r_{v_{n}}z)-(r_{v_{n}}z)^{-u_{n}} \left(\vp_{\alpha(n)}(r_{v_{n}}z)-\sum_{k=0}^{n-1}P_k(r_{v_{n}}z)\right)- \left(P_{n}^*(z)-q_{n}(z) \right)\Bigg| \leq \e_{n}.
	\end{multline}
	Then, we define $P_{n}(z)=z^{u_{n}}P_{n}^*(z)$, and set $f:=\sum_{n\geq 0}P_n$. Observe that by \eqref{eqone} and \eqref{eqmerg}, for any $n\geq 0$ and any $z\in A$,
	\begin{eqnarray*}
		|P_{n}(z)| & = & \left\vert z^{u_{n}}P_{n}^*(z)\right \vert\\
		& \leq & \left\vert z^{u_{n}}\left(P_{n}^*(z)-q_{n}(z)\right) \right \vert + \left\vert z^{u_{n}}\left(\vp_{\alpha(n)}(1)-\sum_{k=0}^{n-1}P_k(1)\right) \right \vert\\
		& \leq & \e_n + \e_{n}w(|z|)\\
		& \leq & 2\e_nw(|z|).
	\end{eqnarray*}
	Similarly, by \eqref{eqtwo} and \eqref{eqmerg}, we get for any given $k\in\mathbb{N}$ and $n>k$, and any $|z|\leq r_{v_k}$,
	\begin{equation}\label{eqthree}
	|P_{n}(z)| \leq 2\e_n.
	\end{equation}
	This shows that $f\in H(\D)$ and that $|f(z)|\leq w(|z|)$ for any $z\in A$ (we recall that $\sum_{n=1}^{\infty}\varepsilon_n\leq 1/2$). 
	
	It remains to check that $f\in \UU_A(\D,\rho)$. Let us fix $n,m\in \N$ and $(l_k)_k$ increasing such that $\alpha(l_k)=n$ and $\beta(l_k)=m$ for any $k\in \N$. By \eqref{unifqn}, \eqref{eqmerg} and \eqref{eqthree}, we get for any $k\in \N$ and $z \in K_m$,
	\begin{eqnarray*}\left| f(r_{v_{l_k}}z) - \vp_n(r_{v_{l_k}}z)\right| & = & \left|P_{l_k}(r_{v_{l_k}}z)-\left(\vp_n(r_{v_{l_k}}z)- \sum_{i=0}^{l_k-1}P_i(r_{v_{l_k}}z)\right) + \sum_{i\geq l_k+1}P_i(r_{v_{l_k}}z)\right|\\
		& \leq & \e_{l_k} + |P_{l_k}^*(z)-q_{l_k}(z)| + \sum_{i\geq l_k+1}\left|P_i(r_{v_{l_k}}z)\right|\\
		& \leq & 2\e_{l_k} +  2\sum_{i\geq l_k+1}\e_i,
	\end{eqnarray*}
	which goes to $0$ as $k\to \infty$. Since $\vp_n(r_{v_{l_k}}z) \to \vp_n(z)$ as $k\to \infty$ uniformly on $K_m$, we deduce that $f\in \UU_A(\D,\rho)$.
\end{proof}

Applying Theorem \ref{thm-growth-restricted} with $A=(0,1)$ and $w(r)=(1-r)^{-1/2},r\in(0,1)$, we get the existence of $f\in \UU_A(\D,\rho)$ such that
\[|f(r)|\leq \frac{1}{\sqrt{1-r}},\quad r\in(0,1).
\]
This implies $(1-r)|f(r)|\leq (1-r)^{1/2},r\in(0,1)$, hence $(1-r)f(r)\to 0$ as $r\to 1$ and then the function $z\to(1-z)f(z)$ does not belong to $\UU_A(\D,\rho)$. We note that for the same function no antiderivative is Abel universal. This implies the following:

\begin{cor}\label{anti} The set $\UU_A(\D,\rho)$ is not closed under multiplication by polynomials or under antiderivation.
\end{cor}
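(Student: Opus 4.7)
The corollary follows almost immediately from the example constructed in the paragraph preceding it, so my plan is just to spell out both implications using the function $f \in \UU_A(\D,\rho)$ produced by Theorem \ref{thm-growth-restricted} with $A=(0,1)$ and growth bound $w(r)=(1-r)^{-1/2}$.

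The key observation underlying both parts is the following simple but crucial principle: if a holomorphic function $h$ on $\D$ is Abel universal, then specialising the definition to the compact set $K=\{1\}\subset \T$ (on which every continuous function is just a complex constant) and to constant target functions $\vp\equiv c\in\C$, one sees that the set of radial values $\{h(r):r\in[0,1)\}$ must be dense in $\C$; equivalently the radial cluster set of $h$ at the point $1$ equals $\C$. So any candidate function in $\UU_A(\D,\rho)$ whose radial values along $[0,1)$ either tend to a finite limit, or stay bounded, fails to be Abel universal.

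For the multiplication-by-polynomials part, set $g(z)=(1-z)f(z)$. Using the bound $|f(r)|\leq (1-r)^{-1/2}$ provided by Theorem \ref{thm-growth-restricted}, I would observe $|g(r)|\leq\sqrt{1-r}\to 0$ as $r\to 1$, so the radial cluster set of $g$ at $1$ is $\{0\}$, which is not dense in $\C$. Hence $g\notin\UU_A(\D,\rho)$, while $f\in\UU_A(\D,\rho)$.

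For the antiderivation part, let $F$ be any antiderivative of $f$ on $\D$. Integrating along the segment $[0,r]$ gives
\[
|F(r)-F(0)|\leq\int_0^r|f(t)|\,dt\leq\int_0^r(1-t)^{-1/2}\,dt=2-2\sqrt{1-r}\leq 2,
\]
so $F$ is uniformly bounded on $[0,1)$ by $M:=|F(0)|+2$. By the principle above, choosing any constant $c\in\C$ with $|c|>M+1$, no sequence of radial values $F(r_n)$ can approximate $c$, so $F\notin\UU_A(\D,\rho)$. Since $F$ was an arbitrary antiderivative of the fixed Abel universal function $f$, this proves $\UU_A(\D,\rho)$ is not closed under antiderivation. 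There is no real obstacle here—the work was done in the proof of Theorem \ref{thm-growth-restricted} and the explicit bound; the corollary is a direct consequence of that construction combined with the elementary cluster-set principle.
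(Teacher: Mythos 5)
Your proof is correct and follows essentially the same route as the paper: the authors likewise take $f$ from Theorem \ref{thm-growth-restricted} with $A=(0,1)$ and $w(r)=(1-r)^{-1/2}$, note that $(1-r)f(r)\to 0$ and that every antiderivative is bounded on $[0,1)$, and conclude via the (implicit) fact that an Abel universal function must have dense radial values at the point $1$. You have merely made explicit the cluster-set principle (taking $K=\{1\}$ in the definition) that the paper leaves unstated.
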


\begin{remark}{\rm (i) Theorem \ref{thm-growth-restricted} and Corollary \ref{anti} hold for universal Taylor series as well, see \cite[Proposition 6]{Gardiner2014}.
		
(ii) We also mention that it is proven in \cite{CharpentierMouze2022} that $\UU_A(\D,\rho)$ is not closed under derivation either. The corresponding question remains open for universal Taylor series. }
\end{remark}	

Our next result shows that Theorem \ref{thm-growth-restricted} fails whenever $A$ contains two paths with two different endpoints in $\T$.

\begin{prop}\label{prop-2-radii}
	Let $\gamma_1,\gamma_2:[0,1)\to \D$ be two disjoint paths with $\gamma_1(r)\to \zeta_1$ and $\gamma_2(r)\to \zeta_2\in\mathbb{T}$ as $r\to 1$, for some $\zeta_1\neq\zeta_2$ in $\T$. Let also $h$ be a positive harmonic function on $\D$. Then there is no $f\in\UU_A(\D,\rho)$ such that $|f|\leq e^h$ on $\gamma_1\cup \gamma_2$. 
\end{prop}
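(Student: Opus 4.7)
The plan is to argue by contradiction, applying the two-constants theorem to a suitable subharmonic function built from $f$ and $h$. Suppose $f\in\UU_A(\D,\rho)$ satisfies $|f|\leq e^h$ on $\gamma_1\cup\gamma_2$. The goal is to bound $\log|f(z_0)|$ at a fixed interior point $z_0$ by a quantity that can be forced to $-\infty$, contradicting $f(z_0)\neq 0$.

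\textbf{Geometric setup.} By restricting $\gamma_1,\gamma_2$ to terminal sub-arcs and joining their starting points by a curve $\sigma\subset\D$ (on which $|f|$ is automatically bounded by some finite $B$ by continuity), I arrange that $\gamma_1\cup\sigma\cup\gamma_2$ is a Jordan arc from $\zeta_1$ to $\zeta_2$ in $\overline{\D}$, and that, together with one of the two open arcs $A\subset\T$ determined by $\zeta_1,\zeta_2$, it bounds a Jordan subdomain $G\subset\D$. The subharmonic function $u:=\log|f|-h$ on $\D$ then satisfies $u\leq 0$ on $\gamma_1\cup\gamma_2$ and $u\leq \log B$ on $\sigma$.

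\textbf{Invoking Abel universality and a two-constants estimate.} Fix a proper compact sub-arc $K\subset A$ such that $A\setminus K$ is the union of two short arcs near $\zeta_1,\zeta_2$, and fix $z_0\in G$ with $f(z_0)\neq 0$, chosen so that $\omega^G_{z_0}(A\setminus K)$ is small compared with $\omega^G_{z_0}(K)$. Applying Abel universality to $\vp\equiv 0$ on $K$ yields $r_n\uparrow 1$ with $\varepsilon_n:=\|f_{r_n}\|_K\to 0$. In the truncation $G_n:=G\cap\{|z|<r_n\}$, the boundary splits into four pieces: the $\gamma$-parts (where $u\leq 0$), $\sigma$ (where $u\leq\log B$), the circular arc $r_nK\subset\{|z|=r_n\}$ (where $u\leq\log\varepsilon_n$, using $h\geq 0$), and two small circular arcs $\Lambda_n^\pm$ near $r_n\zeta_1,r_n\zeta_2$ (where $u\leq\log M_{r_n}$, with $M_r:=\sup_{|z|=r}|f|$). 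Since $h$ is harmonic and continuous on $\overline{G_n}\subset\D$, the harmonic majorisation $\int_{\partial G_n}h\,d\omega^{G_n}_{z_0}=h(z_0)$ combined with the subharmonic maximum principle for $u$ gives
\[
\log|f(z_0)|\leq h(z_0)+(\log B)\,\omega^{G_n}_{z_0}(\sigma)+(\log\varepsilon_n)\,\omega^{G_n}_{z_0}(r_nK)+(\log M_{r_n})\,\omega^{G_n}_{z_0}(\Lambda_n^\pm).
\]
As $n\to\infty$, $\omega^{G_n}_{z_0}(r_nK)\to c:=\omega^G_{z_0}(K)>0$ and $\omega^{G_n}_{z_0}(\Lambda_n^\pm)\to\eta:=\omega^G_{z_0}(A\setminus K)$, where $\eta$ can be made arbitrarily small by shrinking $A\setminus K$.

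\textbf{Main obstacle and conclusion.} The delicate point is to control the term $(\log M_{r_n})\,\eta$, since by Corollary \ref{coro-Abel-univ-funct-all-prop}(a) the maximum modulus $M_r$ may grow extremely rapidly as $r\uparrow 1$. My strategy is to strengthen the previous application of Abel universality: pick an additional proper compact arc $K'\subset\T\setminus A$ and apply universality to the (still proper) union $K\cup K'$ with $\vp\equiv 0$, producing $r_n$ along which $|f|$ is simultaneously small on $r_n(K\cup K')$, i.e.\ on all of the circle $\{|z|=r_n\}$ except two tiny arcs around $r_n\zeta_1,r_n\zeta_2$. A companion two-constants estimate in the disc $\{|z|<r_n\}$ then links $\log M_{r_n}$ to $|\log\varepsilon_n|$ through a factor controlled by the (small) harmonic measure of the uncovered arcs. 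By balancing the three small parameters (the size of $A\setminus K$, the size of $\T\setminus(K\cup K')$, and $\varepsilon_n$), the dominant negative contribution $(\log\varepsilon_n)\,c\to -\infty$ eventually outweighs $(\log M_{r_n})\,\eta$ and the bounded terms, which forces $\log|f(z_0)|\to -\infty$ and hence $f(z_0)=0$, the required contradiction.
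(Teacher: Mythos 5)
There is a genuine gap at exactly the step you flag as the ``main obstacle''. Your plan hinges on bounding $\log M_{r_n}=\log\sup_{|z|=r_n}|f|$ by a ``companion two-constants estimate in the disc $\{|z|<r_n\}$''. This cannot work: the circle $\{|z|=r_n\}$ is the \emph{boundary} of that disc, and the two-constants theorem (or any maximum-principle argument) bounds interior values by boundary values, never boundary values by values on another part of the same boundary. Knowing $|f|\le\varepsilon_n$ on $r_n(K\cup K')$ gives no upper bound whatsoever on $|f|$ on the complementary arcs $\Lambda_n^\pm$ of that same circle; indeed, by Corollary \ref{coro-Abel-univ-funct-all-prop} (a), $M(r,f)$ must grow very rapidly for any Abel universal $f$, and nothing ties that growth to $|\log\varepsilon_n|$ along your sequence. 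Passing to a larger disc $D(0,R)$ with $R>r_n$ only reproduces the same problem on $\{|z|=R\}$. So the term $(\log M_{r_n})\,\omega^{G_n}_{z_0}(\Lambda_n^\pm)$ remains uncontrolled and may dominate the negative term $(\log\varepsilon_n)\,c$; the ``balancing'' you describe cannot be carried out.

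The missing idea is that the uncontrolled arcs $\Lambda_n^\pm$ need never appear. The closed arc $I$ of $\T$ joining $\zeta_1$ to $\zeta_2$, endpoints included, is still a proper compact subset of $\T$, so Abel universality applies to $\varphi\equiv0$ on all of $I$; there is no reason to retreat to a sub-arc $K$ leaving gaps near $\zeta_1,\zeta_2$. This is what the paper does: after reducing to the case where $\gamma_1,\gamma_2$ are the radii to $\zeta_1,\zeta_2$ (which also removes the need for your Jordan-domain surgery with the joining curve $\sigma$), the truncated sector $\{r\zeta:\,0\le r\le r_{\lambda_n},\ \zeta\in I\}$ has boundary consisting only of the two radial segments, where $\log|f|-h\le0$ by hypothesis, and the arc $r_{\lambda_n}I$, where $\log|f|-h\le0$ by universality and positivity of $h$. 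The maximum principle then yields $|f|\le e^h$ on the whole sector, and Fatou's theorem for the positive harmonic function $h$ contradicts the radial unboundedness of $f$ at points of $I$. No harmonic-measure bookkeeping and no control of $M(r,f)$ are required.
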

\begin{proof}
	It is not difficult to see that it suffices to prove the proposition when $\gamma_1$ and $\gamma_2$ are radii, \emph{i.e.} $\gamma_i=\set*{r\zeta_i:\,0\leq r<1}$, $i=1,2$. Let us then assume that we are in this situation. For the sake of contradiction, suppose that there exists $f\in \UU_A(\D,\rho)$ such that $|f(r\zeta_i)|\leq e^{h(r\zeta_i)}$ for any $0<r<1$, $i=1,2$. Let $I$ denote a closed subarc of $\T$ having $\zeta_1$ and $\zeta_2$ as its endpoints. Since $f\in\UU_A(\D,\rho)$, there exists an increasing sequence $(\lambda_n)_n$ of integers such that $|f(r_{\lambda_n}\zeta)|\leq 1$. Hence $\log|f(r_{\lambda_n}\zeta)|\leq0$ for every $\zeta\in I$ and $n\in\mathbb{N}$. Thus, since $h$ is positive, the function $s:=\log|f|-h$ is negative on $r_{\lambda_n}I$ for any $n\in \N$ and by assumption it is also non-positive on $\gamma_1\cup \gamma_2$. Note that $s$ is subharmonic in $\D$, so by the maximum principle, $s\leq0$ on $\set*{r\zeta:0\leq r\leq r_{\lambda_n}\text{ and }\zeta\in I}$. Since $r_{\lambda_n}\to1$ as $n\to\infty$, we get that $s\leq0$ and thus $|f|\leq e^h$ on the sector $\set*{r\zeta:0\leq r< 1\text{ and }\zeta\in I}$. But, by Fatou's Theorem, positive harmonic functions on $\D$ have (finite) radial limits almost everywhere on $\T$, which contradicts the fact that $f\in\UU_A(\D,\rho)$.
\end{proof}

Using the previous proposition, we can give an alternative proof of the local Picard-type property of Abel universal functions.
\begin{proof}[An alternative proof of Corollary \ref{coro-Abel-univ-funct-all-prop} (d)]
	For the sake of contradiction, assume that there exists a function $f\in\UU_A(\D,\rho)$ that omits two values on $D(\zeta,r)\cap\D$ for some $\zeta\in\T$ and $r>0$. Without loss of generality, we can assume that it omits the values 0 and 1 on $D(\zeta,r)\cap\D$. Using Schottky’s theorem, as in the proof of \cite[Corollary 2]{GardinerKhavinson2014}, we deduce that $|f(z)|\leq e^\frac{c}{1-|z|}$ for $z\in D(\zeta,r/3)\cap\D$, where $c$ is a positive constant. Let
	\[
	P(z,\zeta)=\frac{1-|z|^2}{|\zeta-z|^2},\quad z\in\D,\zeta\in\T,
	\]
	denote the Poisson kernel and fix $\zeta_1,\zeta_2\in D(\zeta,\frac{r}{3})\cap\T$, $\zeta_1\neq \zeta_2$. Then, for $i=1,2$ and $0\leq s<1$, we have
	\[
	P(s\zeta_i,\zeta_i)=\frac{1-|s\zeta_i|^2}{|\zeta_i-s\zeta_i|^2}=\frac{1-s^2}{(1-s)^2}=\frac{1+s}{1-s}\geq\frac{1}{1-s}.
	\]
	We conclude that $P(z,\zeta_i)\geq (1-|z|)^{-1}$ for $z\in [0,\zeta_i),i=1,2$, and thus, if we set $h(z)=c(P(z,\zeta_1)+P(z,\zeta_2)),z\in\D$, we have $h(z)\geq (1-|z|)^{-1}$ for $z\in [0,\zeta_1)\cup [0,\zeta_2)$. Then
	\[
	|f(z)|\leq e^\frac{c}{1-|z|}\leq e^{h(z)},\quad z\in([0,\zeta_1)\cup [0,\zeta_2))\cap D(\zeta,r/3).
	\]
	Since $h$ is a positive harmonic function on $\D$, this contradicts Proposition \ref{prop-2-radii}.
\end{proof}

We conclude this section by three open questions that naturally arise from the previous results.
	
\begin{questionss}{\rm (i) Is condition (a) in Corollary \ref{coro-Abel-univ-funct-all-prop} 
sharp for Abel universal functions, in the sense that given any increasing function $w:[0,1)\to [1,\infty)$ that satisfies $\int_0^1 \log^+\log^+w(r)dr=\infty$, there exists $f\in \UU_A(\D,\rho)$ such that $M(r,f)\leq w(r)$?
	
%We shall mention that Brennan and Volberg showed that, given a function $w(r)$ satisfying condition (a) in Corollary \ref{coro-Abel-univ-funct-all-prop}  and some mild additional conditions, there exists a function $f$, not belonging to the MacLane class $\AA$, such that $M(r,f)\leq w(r)$, $r\in (0,1)$, see \cite[Theorem 1]{BrennanVolberg1993}. A promising approach is to exploit their construction to build up an Abel universal function with the prescribed growth.
		
(ii) To prove the local Picard property for universal Taylor series, Gardiner and Khavinson used their Ostrowski gap structure to show that they have very strong growth properties at every boundary point (see \cite[Theorem 1]{GardinerKhavinson2014}). In particular, they showed that if $\psi: [0,1)\to (0,\infty)$ is an increasing function such that $\int_{0}^{1}\log ^{+} \log ^{+} \psi (t) dt < \infty$,
and $f$ is a function in $H(\mathbb{D})$ that satisfies $|f(z)|\leq \psi(|z|)$ on $D(\zeta,r)\cap\mathbb{D}$ for some $\zeta\in\T$ and $r>0$, then $f\not\in\mathcal{U}_{T}$. Is this true if we replace $\mathcal{U}_{T}$ by $\mathcal{U}_{A}$?
		
(iii) Functions satisfying the assumptions of Theorem \ref{thm-BarthRippon-cons} may not be Valiron functions. Yet, is every Abel universal function a Valiron function?
	}
\end{questionss}

\section{Taylor expansion of Abel universal functions}\label{partial-sums}
	
In this section, we investigate properties of the Taylor polynomials of Abel universal functions. Specifically, we will focus on their possible gap structures and the behaviour of their Taylor partial sums outside $\D$. Our general aim is to understand to what extent being an Abel universal function affects the Taylor expansion with respect to these aspects, and \emph{vice-versa}. Before giving motivations and stating our results, we shall recall the definitions of three notions of gaps structure.

\begin{definition}{\rm Let $f(z)=\sum_{k=0}^{\infty}a_kz^{k}$ be a power series with radius of convergence $1$. We say that:
	\begin{enumerate}
	\item[(a)] $f$ possesses \emph{Hadamard gaps} if there is a sequence $(n_k)_{k}$ in $\N$ with $\inf_{k} n_{k+1}/n_{k}>1$ such that $a_j=0$ whenever $j\neq n_{k}$ for all $k\in\mathbb{N}$;
		\item[(b)] $f$ possesses \emph{Hadamard-Ostrowski gaps} if there are sequences $(p_k)_{k}$ and $(q_k)_{k}$ in $\N$ with $1\leq p_1< q_1\leq p_2< q_2\leq ...$ and $\inf_{k}q_k/p_k>1$ such that
		\begin{equation*}
		\limsup_{k\in I,k\to\infty}|a_k|^{\frac{1}{k}}<1,
		\end{equation*}
		where 	$I=\bigcup_{k=0}^\infty\{p_k+1, p_k+2, \dots, q_k\}$;
		\item[(c)] $f$ possesses \emph{Ostrowski gaps} if there are sequences $(p_k)_{k}$ and $(q_k)_{k}$ in $\N$ with $1\leq p_1< q_1\leq p_2< q_2\leq ...$ and $q_k/p_k\to \infty$ such that
		\begin{equation*}
		\limsup_{k\in I,k\to\infty}|a_k|^{\frac{1}{k}}=0,
		\end{equation*}
		where $I=\bigcup_{k=0}^\infty\{p_k+1, p_k+2, \dots, q_k\}$.	
	\end{enumerate}}
\end{definition}

It is clear that if $f$ has Ostrowski gaps then it has Hadamard-Ostrowski gaps. These definitions were motivated by the work of Ostrowski, who showed that there are intimate connections between the gap structure of a Taylor series and the behaviour of its partial sums
outside the disc of convergence. It is interesting for our purposes to mention that the existence of Hadamard gaps is related to the notion of normality, Picard's property and the MacLane class, encountered in Section \ref{Picard}. The following theorem, a compilation of different results, makes this link explicit.

\begin{theorem}\label{Recall-thms-Sons-Campbell-Murai} Suppose that $f=\sum_k a_k z^k $ has Hadamard gaps. 
	\begin{enumerate}
		\item[(a)]\label{Murai1981-thm}\emph{Hwang \cite{Hwang1986}}: If $\limsup_k|a_k|=\infty$, then $f$ assumes every complex value infinitely often in each sector of the form $\{z\in \D:\, \alpha < \arg z < \beta\}$, where $0\leq\alpha < \beta \leq2\pi$.
		\item[(b)]\label{SonsCampbell-thm}\emph{Sons and Campbell \cite{SonsCampbell}}: If $\limsup_k|a_k|=\infty$, then $f$ is not normal in $\D$;
		\item[(c)]\label{Murai1983-thm}\emph{Murai \cite{Murai1983}}: $f$ belongs to the Maclane class $\AA$.
	\end{enumerate}
\end{theorem}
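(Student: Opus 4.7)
This theorem compiles three independent, previously published results of Hwang, Sons and Campbell, and Murai on Hadamard gap series; since each part is explicitly attributed to a specific source, the paper will almost certainly give only citations rather than a self-contained argument. Nevertheless, here is how I would attempt to reconstruct each part from standard tools, assuming only classical lacunary-series estimates and normality criteria.

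For (a), I would pursue a two-step strategy. First, invoke a classical growth result for lacunary series (in the spirit of Weiss, or Fuchs--Pólya) showing that under $\limsup_k |a_k| = \infty$ and the Hadamard gap condition, every open sector $\{\alpha < \arg z < \beta\}\cap \D$ contains points where $|f|$ is arbitrarily large. Second, combine this with a Picard--Montel style argument: if $f$ were to omit two values in such a sector, then by Montel's theorem the family of dilates $\{f_r\}$ restricted to a subdomain would be normal, forcing controlled growth incompatible with the first step. This is the main conceptual obstacle of the whole theorem, because one must upgrade unboundedness in every sector to the full Picard property \emph{infinitely often}, which typically requires an additional argument using the self-similar structure of lacunary series under iteration of annular dilations.

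For (b), I would deduce non-normality directly from (a): a normal function has bounded spherical derivative with respect to the hyperbolic metric and cannot, by Marty's criterion, assume every value in arbitrarily small neighbourhoods of every boundary direction. Alternatively, one can argue more concretely by picking indices $n_k$ with $|a_{n_k}|\to\infty$ and then constructing a sequence $z_k\to\T$ at which $(1-|z_k|^2) f^{\#}(z_k) \to \infty$, using that a single large term in a Hadamard lacunary sum dominates the derivative in a suitable annular region. Either route is relatively soft once (a), or the growth estimates used in its proof, are in hand.

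For (c), membership in the MacLane class $\AA$ requires asymptotic values at a dense subset of $\T$. My plan would be to exploit the rapid separation $n_{k+1}/n_k > 1$ to construct, at a dense set of boundary points $\zeta \in \T$, paths along which partial cancellations among the blocks force the series to converge to a finite or infinite limit; rotational symmetry considerations would then give the density of the resulting set. The technique is classical for gap series but delicate in execution. Overall, I expect the value-distribution result (a) to be the hardest, (b) to follow almost immediately from it, and (c) to require a separate constructive argument of a well-known gap-series flavour.
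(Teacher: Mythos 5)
You correctly anticipated the key point: the paper supplies no proof of this theorem at all. It is presented purely as a compilation of three results quoted from the literature (Hwang, Sons--Campbell, Murai), and the only additional information given is the remark that follows it, which explains that part (c) splits into two cases according to whether the coefficients are bounded: Murai's contribution is the unbounded case, where $f$ in fact admits the asymptotic value $\infty$ at \emph{every} point of $\T$, while the bounded case goes back to an older result of Paley. So there is no argument in the paper against which your reconstructions can be checked, and they can only be assessed on their own terms.

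Two comments on the sketches themselves. First, your primary route to (b) --- deducing non-normality directly from the value-distribution statement (a) --- does not work as stated: a normal function can perfectly well assume all but two complex values infinitely often in every sector (the elliptic modular function, viewed as a map of $\D$ onto $\C\setminus\{0,1\}$, does exactly this near every boundary point), so ``assumes every value infinitely often in every sector'' is not by itself incompatible with Marty's criterion. The workable argument is your alternative one: use the gap structure and the unboundedness of the coefficients to exhibit points $z_k$ with $|z_k|\to 1$ and $(1-|z_k|^2)f^{\#}(z_k)\to\infty$. Second, for (c) your plan of producing asymptotic values at a dense set of boundary points by engineering cancellations along selected paths does not match what the cited result actually establishes in the essential case: when $\limsup_k|a_k|=\infty$ the conclusion is the much stronger statement that $\infty$ is an asymptotic value at every point of $\T$, and the remaining case of bounded coefficients is handled by a separate Paley-type theorem rather than by a cancellation construction. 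Your assessment of the relative difficulty of the three parts is reasonable, but none of the sketches amounts to a proof, nor does the paper intend to provide one.
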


\begin{remark}{\rm In part (c), the contribution of Murai was to show that if $f$ has Hadamard gaps and unbounded Taylor coefficients, then $f$ admits the asymptotic value $\infty$ at \emph{every} point of $\T$. In the case where the coefficients are bounded, the conclusion follows from an older result of Paley, see \cite{Weiss1959}.}
\end{remark}

Assertions (a) and (b) in Theorem \ref{Recall-thms-Sons-Campbell-Murai}, in comparison with assertions (c) and (d) in Corollary \ref{coro-Abel-univ-funct-all-prop}, may suggest that Abel universal functions could be found among Hadamard lacunary series. However, part (c) of Theorem \ref{Recall-thms-Sons-Campbell-Murai} rules out this possibility, since Abel universal functions do not belong to the Maclane class $\AA$:

\begin{cor}There is no Abel universal function with Hadamard gaps.
\end{cor}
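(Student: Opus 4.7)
The proof plan is to argue by contradiction, combining two facts already recorded in the paper. Suppose, for the sake of contradiction, that $f \in \mathcal{U}_A(\mathbb{D})$ admits a Taylor expansion with Hadamard gaps.

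First I would apply part (c) of Theorem \ref{Recall-thms-Sons-Campbell-Murai}, which asserts that every Hadamard lacunary power series belongs to the MacLane class $\mathcal{A}$. As highlighted in the Remark following that theorem, this statement combines Murai's contribution (unbounded coefficient case) with an older result of Paley (bounded coefficient case), so no splitting is needed at the level of the application: we obtain $f \in \mathcal{A}$ unconditionally.

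Next I would invoke the non-membership of Abel universal functions in $\mathcal{A}$, observed in the discussion leading up to Corollary \ref{coro-Abel-univ-funct-all-prop}. Concretely, our $f$ is a Valiron function: by Corollary \ref{coro-Abel-univ-funct-all-prop}(b) it admits $\infty$ as an asymptotic value, and it is unbounded on $\mathbb{D}$ (for instance by Corollary \ref{coro-Abel-univ-funct-all-prop}(a), which forces $M(r,f)$ to grow very fast, or by the Picard-type property in (d)). Since, as noted immediately after Theorem \ref{Hornblower}, Valiron functions cannot lie in $\mathcal{A}$, we deduce $f \notin \mathcal{A}$, contradicting the previous paragraph.

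There is no serious technical obstacle: the corollary is a clean collision between the constraint that Hadamard gaps push a function into $\mathcal{A}$ and the constraint that Abel universality, through its Valiron character, pushes a function out of $\mathcal{A}$. The only care needed is in citing Theorem \ref{Recall-thms-Sons-Campbell-Murai}(c) in a way that covers both the bounded and unbounded coefficient cases, which the statement of that theorem already does.
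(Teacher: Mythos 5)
Your overall strategy coincides with the paper's: combine Theorem \ref{Recall-thms-Sons-Campbell-Murai}(c) (Hadamard gaps force membership in the MacLane class $\mathcal{A}$) with the fact that Abel universal functions lie outside $\mathcal{A}$. The first half is fine. The gap is in your justification of the second half. You declare $f$ to be a Valiron function on the grounds that it is unbounded and admits $\infty$ as an asymptotic value. That is not the definition: in the paper's terminology (following Barth--Rippon), a Valiron function is an unbounded holomorphic function on $\D$ that is \emph{bounded on some spiral}; Valiron's theorem then yields $\infty$ as an asymptotic value. You are using the converse implication, which is not available --- and indeed whether every Abel universal function is a Valiron function is explicitly listed as an open question at the end of Section \ref{Picard} (Question (iii)). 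So the step ``$f$ is Valiron, hence $f\notin\mathcal{A}$'' cannot be taken for granted. A related caution: Corollary \ref{coro-Abel-univ-funct-all-prop}(a) together with Theorem \ref{Hornblower} also does not rescue this step, since Hornblower's integral condition is sufficient for membership in $\mathcal{A}$ but not necessary, so $\int_0^1\log^+\log^+M(r,f)\,dr=\infty$ by itself does not place $f$ outside $\mathcal{A}$.

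The route the paper actually takes is the direct observation recorded just before Corollary \ref{coro-Abel-univ-funct-all-prop}: Abel universal functions cannot belong to $\mathcal{A}$, essentially because they have maximal radial cluster sets at every point of $\T$ (so no radial limit, finite or infinite, exists anywhere), which is incompatible with possessing asymptotic values at the points of a dense subset of $\T$ in the sense required for $\mathcal{A}$. Substituting that observation for your Valiron detour turns your argument into exactly the paper's one-line proof; as written, however, the second half rests on an unproved (and possibly false) claim.
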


In contrast, since the existence of Ostrowski gaps is a more flexible constraint, it is not difficult to construct an Abel universal function that has Ostrowski gaps (we refer the reader to \cite{CharpentierMouze2015}, where an ad hoc construction of universal functions in $H(\D)$ with \emph{large} Ostrowski-type gaps is displayed). This can also be derived from the fact that every universal Taylor series possesses Ostrowski gaps \cite{GehlenLuhMuller2000,MVY2006}, the fact that both classes of Abel universal functions and universal Taylor series are residual in $H(\D)$, and the Baire category theorem. Nevertheless it is shown in \cite{Charpentier2020} that there exist Abel universal functions without Ostrowski gaps. This is not really surprising either since the existence of Ostrowski gaps for universal Taylor series comes from the approximation properties of their Taylor partial sums on sets that are non-thin at $\infty$ (see \cite{MVY2006} for details), while Abel universal functions are defined only by properties inside $\D$ near points of $\T$. This remark is a motivation to ask whether there are Abel universal functions without Hadamard-Ostrowski gaps.

Our first result gives an affirmative answer. It even tells us that, given any prescribed function $g\in H(\D)$, we can find an Abel universal function whose Taylor coefficients are larger in modulus than those of $g$.
	
\begin{theorem}\label{coef} 
%\label{Coro-Abel-witout-HO-gaps}
For any sequence $(\gamma_k)_k$ of positive real numbers such that $\limsup_k\gamma_k^{1/k}\leq 1$, there exists a function $f(z)=\sum_{k=0}^{\infty} a_kz^k$ in $\UU_A(\D,\rho)$ with $|a_k|\geq \gamma_k$ for every $k$ large enough. In particular, there exist Abel universal functions without Hadamard-Ostrowski gaps.
\end{theorem}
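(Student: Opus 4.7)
The plan is to construct $f$ in the form $f = \sum_{n \ge 0} P_n + G$, where $G(z) := \sum_{k \ge 0} c_k z^k$ is a Taylor series with $|c_k| = \gamma_k$ for every $k$ (so $G \in H(\D)$ thanks to $\limsup_k \gamma_k^{1/k} \le 1$) whose \emph{phases} are chosen adaptively during the construction, and each $P_n$ has the shape $P_n(w) = w^{u_n} P_n^*(w)$, with strictly increasing integers $u_n$ arranged so that the ``$P$-ranges'' $[u_n, u_n + \deg P_n^*]$ are pairwise disjoint. With this structure, for every $k$ outside every $P$-range one has $a_k(f) = c_k$, so $|a_k(f)| = \gamma_k$; for $k \in [u_n, u_n + \deg P_n^*]$ the coefficient $[z^{k-u_n}] P_n^*$ becomes known at stage $n$ (after Mergelyan), and we then set $c_k := \gamma_k\, e^{i \arg([z^{k-u_n}] P_n^*)}$, which forces $|a_k(f)| = \gamma_k + |[z^{k-u_n}] P_n^*| \ge \gamma_k$.

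The main technical idea (contrasting with the proof of Theorem~\ref{thm-growth-restricted}) is to carry out the Mergelyan approximation at stage $n$ on a set lying \emph{strictly inside} $\D$, namely
\[
E_n := r_{v_n} K_{\beta(n)} \,\cup\, r_{v_{n-1}} \overline{\D},
\]
so that $G$ itself (holomorphic on $\D$) can be subtracted directly from the target, with no uniform continuity step from $\T$ to $r_{v_n} \T$ required. Concretely: pick $v_n > v_{n-1}$; then choose $u_n > u_{n-1} + \deg P_{n-1}^*$ large enough that $\sum_{k \ge u_n} \gamma_k r_{v_n}^k \le \varepsilon_n$ (possible for any fixed $r_{v_n} < 1$ because the series converges); set $c_k := \gamma_k$ for $u_{n-1} + \deg P_{n-1}^* < k < u_n$, so that $G_{<u_n}(w) := \sum_{k < u_n} c_k w^k$ is a determined polynomial; and apply Mergelyan's theorem to
\[
q_n(w) := \begin{cases} w^{-u_n}\bigl(\vp_{\alpha(n)}(w/r_{v_n}) - \sum_{k<n} P_k(w) - G_{<u_n}(w)\bigr) & \text{if } w \in r_{v_n} K_{\beta(n)}, \\ 0 & \text{if } w \in r_{v_{n-1}} \overline{\D}. \end{cases}
\]
The set $E_n$ is compact with connected complement (its two pieces are disjoint since $r_{v_{n-1}} < r_{v_n}$, and $r_{v_n} K_{\beta(n)}$ is a proper sub-arc of the circle $C(0, r_{v_n})$), and $q_n \in A(E_n)$. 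The Mergelyan polynomial $P_n^*$ satisfies $\|P_n^* - q_n\|_{E_n} \le \varepsilon_n$, so $P_n := w^{u_n} P_n^*$ satisfies $\|P_n\|_{r_{v_{n-1}} \overline{\D}} \le \varepsilon_n$ and approximates $\vp_{\alpha(n)}(w/r_{v_n}) - \sum_{k<n} P_k(w) - G_{<u_n}(w)$ within $\varepsilon_n$ on $r_{v_n} K_{\beta(n)}$.

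Abel universality then drops out: for $z \in K_{\beta(n)}$, writing $w = r_{v_n} z$ and decomposing
\[
f(r_{v_n} z) = \bigl[P_n + \sum_{k<n} P_k + G_{<u_n}\bigr](r_{v_n} z) + \sum_{k>n} P_k(r_{v_n} z) + G_{\ge u_n}(r_{v_n} z),
\]
the first bracket is within $\varepsilon_n$ of $\vp_{\alpha(n)}(z)$, the $P$-tail is $\le \sum_{k>n} \|P_k\|_{r_{v_{k-1}} \overline{\D}} \le \sum_{k>n} \varepsilon_k$, and the $G$-tail is $\le \sum_{k \ge u_n} \gamma_k r_{v_n}^k \le \varepsilon_n$ by the choice of $u_n$; density of $(\vp_m)_m$ in $C(K)$ for every proper compact $K \subset \T$ together with exhaustion by $(K_n)_n$ finish the verification that $f \in \UU_A(\D, \rho)$. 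The main obstacle to navigate is precisely the residual $G$-tail $G_{\ge u_n}(r_{v_n} z)$: in a naive ``Theorem~\ref{thm-growth-restricted}-style'' construction one would apply Mergelyan on $K_{\beta(n)} \subset \T$ and then transfer to $r_{v_n} K_{\beta(n)}$ via uniform continuity, which forces $r_{v_n}$ extremely close to $1$ and makes the bound $\sum_{k \ge u_n} \gamma_k r_{v_n}^k \le \varepsilon_n$ impossible to enforce without blowing up $u_n$ in a non-closing loop; performing Mergelyan directly inside $\D$ sidesteps this. Finally, applying the theorem with $\gamma_k \equiv 1$ forces $1 \le |a_k|^{1/k} \to 1$ (the upper bound coming from $f \in H(\D)$), ruling out every Hadamard--Ostrowski interval and yielding the last assertion of the theorem.
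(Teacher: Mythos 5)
Your proposal is correct and follows essentially the same route as the paper's proof: an inductive construction $f=\sum_n P_n+(\text{correction series})$ in which Mergelyan's theorem is applied on a compact set of the form (small closed disc) $\cup$ (dilated proper arc of a circle) lying strictly inside $\D$, with correction coefficients of modulus at most $2\gamma_k$ forcing $|a_k|\ge\gamma_k$ and the same three-term tail estimates establishing Abel universality. The only cosmetic difference is your phase-aligned correction $c_k=\gamma_k e^{i\arg(\cdot)}$, where the paper instead uses real corrections $b_{k,n}\in\{0,2\gamma_k\}$ chosen according to whether $|\Re(a_{k,n})|\ge\gamma_k$; both devices serve the identical purpose.
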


	For the proof of Theorem \ref{coef}, we will need the following simple lemma, whose verification is left to the reader.
	
	\begin{lemma}\label{doublesum}If $(\gamma_k)_k$ is as in the previous theorem, then for any $0<r<1$, the series $\sum_{j\geq n}\sum_{k\geq j}\gamma_kr^k$ tends to $0$ as $n\to \infty$.
	\end{lemma}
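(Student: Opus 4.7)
The plan is to reduce the double sum to a single tail sum by interchanging the order of summation, then bound the result using the growth hypothesis on $(\gamma_k)_k$ together with the absolute convergence of the derivative series of a geometric series.

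First I would fix $r\in(0,1)$ and choose $\rho>1$ small enough that $s:=r\rho<1$; this is possible since $1/r>1$. The hypothesis $\limsup_k\gamma_k^{1/k}\leq 1$ then gives an index $N$ such that $\gamma_k\leq\rho^k$ for every $k\geq N$, and consequently
\[
\gamma_k r^k\leq s^k\qquad(k\geq N).
\]

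Next I would rewrite the double sum. For $n\geq N$, exchanging the order of summation (all terms are nonnegative, so Tonelli applies) yields
\[
\sum_{j\geq n}\sum_{k\geq j}\gamma_k r^k
= \sum_{k\geq n}\gamma_k r^k\,\bigl|\{j:n\leq j\leq k\}\bigr|
= \sum_{k\geq n}(k-n+1)\,\gamma_k r^k
\leq \sum_{k\geq n}(k-n+1)\,s^k
\leq \sum_{k\geq n}k\,s^k.
\]

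The series $\sum_{k\geq 0}ks^k$ converges (its sum equals $s/(1-s)^2$), so its tail $\sum_{k\geq n}ks^k$ tends to $0$ as $n\to\infty$. This proves the lemma. I do not anticipate any real obstacle; the only point worth noting is the freedom to enlarge $r$ to $r\rho$, which requires the strict inequality $r<1$ (the statement assumes $0<r<1$, so this is fine).
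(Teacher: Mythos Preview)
Your argument is correct. The paper does not actually give a proof of this lemma (it simply states that ``the verification is left to the reader''), so there is nothing to compare against; your approach---swapping the order of summation and bounding by the tail of $\sum_k k s^k$ for a suitable $s\in(r,1)$---is exactly the kind of routine verification the authors had in mind.
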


\begin{proof}[Proof of Theorem \ref{coef}]
Let $\rho=(r_n)_n$. Let us fix a sequence $(R_n)_n$ in $[0,1)$ such that $0<R_n<r_n<R_{n+1}<r_{n+1}<1$, $n\in\N$. We build by induction sequences $(P_n)_n$ and $(Q_n)_n$ of polynomials and an increasing sequence $(u_n)_n$ of integers. We set $P_0\equiv Q_0\equiv0$ and $u_0=0$ and once we have built $P_0,\ldots ,P_{n-1}$, $Q_0,\ldots ,Q_{n-1}$ and $u_0,\ldots ,u_{n-1}$ we choose $u_n > \deg(P_{n-1})$ and apply Mergelyan's theorem to get $P_n(z)=\sum_{i\geq u_n}a_{i,n}z^i$ so that the following hold:

\begin{enumerate}[(a)]
\item \label{coefa} $\max\left\{\sum_{i\geq u_{n}}\gamma_ir_{n+1}^i,\sum_{j\geq u_n}\sum_{i\geq j}\gamma_ir_n^i,(r_n/R_{n+1})^{u_n}/(1-r_n/R_{n+1})\right\}\leq \e_n$;
\item \label{coefb} $\sup_{|z|\leq R_n} |P_n(z)| \leq \varepsilon_n$;
\item \label{coefc} $\sup_{z\in K_{\beta(n)}}|P_n(r_nz)-\left(\vp_{\alpha(n)}(z)-\sum_{0\leq j\leq n-1}(P_j+Q_j)(r_nz)\right)|\leq \e_n.$
\end{enumerate}
Note that \eqref{coefa} is possible because of Lemma \ref{doublesum}. Then we define $Q_n(z)=\sum _{i=\text{deg}(P_{n-1})+1}^{\text{deg}(P_{n})}b_{i,n}z^i$ so that:
\begin{enumerate}[(a)]
\item[(d)] \label{itemcoef1} $b_{i,n}=\gamma_i$ for any $i=\text{deg}(P_{n-1})+1,\ldots u_n-1$;
\item[(e)] \label{itemcoef2} for any $i=u_n,\ldots,\text{deg}(P_n)$, $b_{i,n}=0$ if $|\Re(a_{i,n})|\geq \gamma_i$ and $b_{i,n}=2\gamma_i$ if $|\Re(a_{i,n})|\leq \gamma_i$.
\end{enumerate}

Observe that for any $n$ and any $i$, we have $0\leq b_{i,n}\leq 2\gamma_i$ and that, if we set $a_{i,n}=0$ for any $i=\text{deg}(P_{n-1})+1,\ldots u_n-1$, then $|a_{i,n}+b_{i,n}|\geq |\Re(a_{i,n}+b_{i,n})|\geq \gamma_i$. Using \eqref{coefb} and (\hyperref[itemcoef2]{e}), we get that the function $f(z)=\sum _{n\geq 0}(P_n+Q_n)(z)=\sum _k a_k z^k$ is in $H(\D)$ and, by the previous, that $|a_k|\geq \gamma_k$ for any $k$.

The proof will be completed once we have proven that $f\in \UU_A(\D,\rho)$. Let us fix $n,m\in \N$ and let $(n_l)_l\subset \N$ be such that for any $l\in \N$, $\alpha(n_l)=n$ and $\beta(n_l)=m$. By \eqref{coefc}, for $z\in K_m$ we have
\[
\left|f(r_{n_l}z)-\vp_n(z)\right| \leq \e_{n_l} + \left|Q_{n_l}(r_{n_l}z)\right| + \left|\sum _{j\geq n_l+1} P_j(r_{n_l}z)+Q_j(r_{n_l}z)\right|.
\]
Moreover it follows from (\hyperref[itemcoef1]{d}), (\hyperref[itemcoef2]{e}) and \eqref{coefa} that
\begin{equation*}
\left|Q_{n_l}(r_{n_l}z)\right| \leq \sum_{i\geq u_{n_{l}-1}}\gamma_{i}r_{n_l}^i \leq \e_{n_l-1}.
\end{equation*}
Now, let us observe that by \eqref{coefb} and Cauchy's inequalities we have $\left|a_{i,j}\right|\leq \e_{j}R_{j}^{-i}$ for any $j\geq n_l +1$ and any $i \in \N$, whence by \eqref{coefa} and (\hyperref[itemcoef2]{e}), for any $z\in K_m$,
\begin{eqnarray*}
\left|\sum_{j\geq n_l+1} \left(P_j(r_{n_l}z)+Q_j(r_{n_l}z)\right)\right| & \leq & 
\sum_{j\geq n_l+1}\sum_{i\geq u_{j}}\e_{j}\left(\frac{r_{n_l}}{R_{j}}\right)^i+ \sum_{j\geq n_l+1}\sum_{i\geq u_{j-1}}r_{n_l}^i
\\& \leq & 
\sum_{j\geq n_l+1}\e_{j}\frac{}{}\frac{\left(r_{n_l}/R_{n_l+1}\right)^{u_{n_l}}}{1-r_{n_l}/R_{n_l+1}}+
\sum_{j\geq u_n}\sum_{i\geq j}b_ir_n^i
\\& \leq & \left(\sum_{j\geq n_l+1}\e_{j}\right) + \varepsilon_{n_l}.
\end{eqnarray*}
Altogether we get
\[
\sup_{z\in K_m}\left|f(r_{n_l}z) -\vp_n (z) \right|\rightarrow 0\quad\text{as }l\to 0,
\]
and so $f\in \UU_A(\D,\rho)$.
\end{proof}

As recalled in the introduction, the partial sums of the Taylor expansion at $0$ of any Abel universal function must be unbounded at any point in $\T$. We will now show that for some Abel universal functions, the partial sums of their Taylor expansion even go to $\infty$ almost everywhere outside $\D$. This confirms that being Abel universal does not impose on the Taylor polynomials any oscillatory behaviour (at least on a large subset of $\C\setminus \D$).

We shall now see that Theorem \ref{coef} implies that for some Abel universal functions $f$, the sequence of partial sums $(S_n(f))_n$ converges to $\infty$ \emph{locally in capacity} in $\C\setminus \overline{\D}$. This will be a consequence of a result of Kalmes, M\"uller and Nie{\ss} that we will state below. We recall that a sequence $(f_n)_n$ of Borel-measurable functions on a set $U\subset\C$ converges to $\infty$ in capacity if, for every $M>0$, we have:
\[
\lim_{n\to\infty}\text{cap}\left(\left\{z\in U:|f_n(z)|\leq M\right\}\right)=0.
\]
Moreover, if $U$ is open, we say that $(f_n)_n$ converges to $\infty$ \emph{locally} in capacity if it converges to $\infty$ in capacity on any compact subset of $U$. 

\begin{theorem}[Theorem 1.4, \cite{KalmesMullerNiess2013}]\label{thm-Kalmes}Let $f(z)=\sum_ka_kz^k$ be a Taylor series with radius of convergence $1$ and without Hadamard-Ostrowski gaps. Then $(S_n(f))_n$ converges to $\infty$ locally in capacity in $\C\setminus \overline{\D}$.
\end{theorem}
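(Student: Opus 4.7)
The plan is to prove the contrapositive: if $(S_n(f))_n$ fails to converge to $\infty$ locally in capacity on $\C\setminus\overline{\D}$, then $f$ has Hadamard-Ostrowski gaps. Unfolding the failure, there exist a compact set $K\subset\C\setminus\overline{\D}$, constants $M,\varepsilon>0$, and a subsequence $(n_k)_k$ such that the sets $E_{n_k}:=\{z\in K:|S_{n_k}(f)(z)|\leq M\}$ satisfy $\operatorname{cap}(E_{n_k})\geq\varepsilon$ for every $k$. After thinning the subsequence, one may further assume $n_{k+1}/n_k\geq\lambda$ for some fixed $\lambda>1$. Finally fix $\rho\in(0,1)$: since the radius of convergence of $f$ is $1$, local uniform convergence of $(S_n(f))_n$ on $\overline{D(0,\rho)}$ yields a constant $C>0$ with $\|S_{n_k}(f)\|_{\overline{D(0,\rho)}}\leq C$ for every $k$.

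The central step is to combine the two uniform bounds---one on $\overline{D(0,\rho)}\subset\D$ and one on $E_{n_k}\subset\C\setminus\overline{\D}$---through the Bernstein-Walsh inequality. Applied to the polynomial $S_{n_k}(f)$ of degree at most $n_k$ and the compact set $F_k:=\overline{D(0,\rho)}\cup E_{n_k}$, it gives
\[
|S_{n_k}(f)(z)|\leq\max(C,M)\cdot e^{n_k\,g_{F_k}(z)},\qquad z\in\C,
\]
where $g_{F_k}$ denotes the Green's function of the unbounded component of $\C\setminus F_k$ with pole at $\infty$. Since $F_k$ contains the fixed disk $\overline{D(0,\rho)}$ together with a set of capacity $\geq\varepsilon$ located in some fixed annulus $\{R_1\leq|z|\leq R_2\}$ with $R_1>1$, potential-theoretic comparison (harmonic measure, Frostman's theorem, equilibrium potentials) should provide a uniform upper bound $g_{F_k}(z)\leq\beta$ on some circle $|z|=r^{\ast}$ with $r^{\ast}>1$, $\beta$ chosen so that $e^{\beta}<r^{\ast}$. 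Cauchy's inequalities on $|z|=r^{\ast}$ then yield
\[
|a_j|\leq C'\,\frac{e^{n_k\beta}}{(r^{\ast})^j},\qquad 0\leq j\leq n_k.
\]
Setting $p_k:=\lfloor n_k/\lambda\rfloor$ and $q_k:=n_k$, one checks $p_k<q_k\leq p_{k+1}$ and $q_k/p_k\to\lambda>1$; moreover, for $j\in(p_k,q_k]$ one has $|a_j|^{1/j}\leq(C')^{1/j}e^{\beta}/r^{\ast}$, so $\limsup_{j\in I,\,j\to\infty}|a_j|^{1/j}\leq e^{\beta}/r^{\ast}<1$ with $I=\bigcup_k\{p_k+1,\ldots,q_k\}$. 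This exhibits a Hadamard-Ostrowski gap structure for $f$, contradicting the hypothesis.

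The main obstacle is the potential-theoretic input: producing the uniform bound $\beta$ on $g_{F_k}\restriction_{|z|=r^{\ast}}$ independently of the geometry of the individual sets $E_{n_k}$, together with the quantitative inequality $e^{\beta}<r^{\ast}$. The key heuristic is that since each $E_{n_k}$ lies inside the fixed compact annulus $\{R_1\leq|z|\leq R_2\}$ and has capacity at least $\varepsilon$, its equilibrium measure forces a uniform amount of harmonic mass outside $\overline{\D}$, which pushes $g_{F_k}$ uniformly below the Green's function of $\overline{D(0,\rho)}$ alone. Quantifying this drop---for instance via a two-constants-type majorisation on the annular region between $\overline{D(0,\rho)}$ and $E_{n_k}$, or by direct comparison with the equilibrium potential of $F_k$---is precisely where the hypothesis $\operatorname{cap}(E_{n_k})\geq\varepsilon$ is genuinely used and where the technical core of the argument lies.
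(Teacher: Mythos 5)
First, a point of comparison: the paper does not prove this statement at all --- it is quoted verbatim as Theorem~1.4 of \cite{KalmesMullerNiess2013} and used as a black box, so there is no internal proof to measure you against. Your reconstruction follows what is indeed the standard route for results of this type (Bernstein--Walsh on the union of a disc inside $\D$ and the exceptional set outside $\overline{\D}$, then Cauchy estimates on an intermediate circle), and the skeleton --- negation of convergence in capacity, thinning to $n_{k+1}/n_k\geq\lambda$, the blocks $p_k=\lfloor n_k/\lambda\rfloor$, $q_k=n_k$ --- is sound.

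However, the step you flag as ``the technical core'' is not merely unproven; as set up, it is false, and the fix requires an idea that is absent from your sketch. You fix $\rho\in(0,1)$ arbitrarily at the outset and then hope for $\sup_{|z|=r^*}g_{F_k}(z)=\beta$ with $e^{\beta}<r^{*}$, i.e.\ $g_{F_k}<\log|z|$ on the circle. But $g_{\overline{D(0,\rho)}}(z)=\log(|z|/\rho)=\log|z|+\log(1/\rho)$ already exceeds the target by the fixed amount $\log(1/\rho)$, so ``pushing $g_{F_k}$ below the Green's function of $\overline{D(0,\rho)}$ alone'' (your stated heuristic) is automatic and insufficient: one must show that attaching $E_{n_k}$ lowers the Green's function by \emph{more than} $\log(1/\rho)$ on a full circle. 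Writing $g_{\overline{D(0,\rho)}}-g_{F_k}=\int g_{\overline{D(0,\rho)}}\,d\omega_{\Omega_k}(z,\cdot)\geq \log(R_1/\rho)\,\omega_{\Omega_k}(z,E_{n_k})$, the gain is controlled by a harmonic measure which is bounded below uniformly in $k$ but can be very small when $\varepsilon$ is small (for $E$ a disc of radius $\varepsilon$ it is of order $1/\log(1/\varepsilon)$); for, say, $\rho=1/2$ and $\varepsilon=10^{-2}$ the required inequality $\log(R_1/\rho)\,\omega(z,E)>\log(1/\rho)$ fails at points of the circle far from $E$, and your Cauchy estimate then yields only $|a_j|^{1/j}\lesssim e^{\beta}/r^{*}>1$, which is vacuous. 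The missing idea is that $\rho$ must be chosen \emph{last}, close enough to $1$ that the deficit $\log(1/\rho)$ drops below the uniform harmonic-measure gain $c_0\log R_1$ (the lower bound $c_0>0$ surviving as $\rho\to1$ by monotonicity of harmonic measure in the domain); only then does $e^{\beta}<r^{*}$ hold. A second, smaller slip: from $|a_j|\leq C'e^{n_k\beta}/(r^{*})^{j}$ you get $|a_j|^{1/j}\leq (C')^{1/j}e^{n_k\beta/j}/r^{*}$, and for $j\in(p_k,q_k]$ the exponent $n_k\beta/j$ is only bounded by roughly $\lambda\beta$, not $\beta$; so you need $e^{\lambda\beta}<r^{*}$, which forces $\lambda$ to be chosen close to $1$ \emph{after} $\beta$ and $r^{*}$ are determined. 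Both repairs are feasible, but the order of quantifiers ($K,M,\varepsilon$ first, then $r^{*},\beta,\rho,\lambda$) is essential and is not what you wrote.
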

	
We deduce from Theorems \ref{coef} and \ref{thm-Kalmes} the following result.	
	
	\begin{cor}\label{coro-div-to-infty-cap}There exists $f\in\UU_A(\D,\rho)$ such that $(S_n(f))_n$ converges to $\infty$ locally in capacity in $\C\setminus \overline{\D}$. In particular, some subsequence of $(S_n(f))_n$ converges pointwise to $\infty$ in $\C\setminus \overline{\D}$, outside a set of capacity $0$.
	\end{cor}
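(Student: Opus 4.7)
My plan is to deduce the corollary as an essentially immediate combination of Theorems \ref{coef} and \ref{thm-Kalmes}, with a standard extraction argument to obtain the pointwise refinement.

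First I would invoke Theorem \ref{coef} with the constant sequence $\gamma_k=1$, which satisfies $\limsup_k \gamma_k^{1/k}=1$. This produces a function $f(z)=\sum_k a_kz^k$ in $\UU_A(\D,\rho)$ with $|a_k|\geq 1$ for all sufficiently large $k$. Such a function has no Hadamard-Ostrowski gaps: indeed $|a_k|^{1/k}\geq 1$ eventually, so no sub-block $I$ of the prescribed form can satisfy $\limsup_{k\in I}|a_k|^{1/k}<1$. (Alternatively, one may pick up such an $f$ directly from the final sentence of Theorem \ref{coef}.)

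Second, I would apply Theorem \ref{thm-Kalmes} to this $f$: its radius of convergence is $1$ and it has no Hadamard-Ostrowski gaps, so $(S_n(f))_n$ converges to $\infty$ locally in capacity on $\C\setminus\overline{\D}$, which is the first conclusion.

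For the ``in particular'' part I would exhaust $\C\setminus\overline{\D}$ by the compact annuli $K_m=\{z:1+1/m\leq |z|\leq m\}$, $m\geq 2$. On each fixed $K_m$, local convergence to $\infty$ in capacity admits a subsequence converging to $\infty$ outside a polar subset of $K_m$, by the classical Borel-Cantelli-type extraction (for each $m$, choose $n_k^{(m)}$ so that $\operatorname{cap}\bigl(\{z\in K_m:|S_{n_k^{(m)}}(f)(z)|\leq k\}\bigr)\leq 2^{-k}$, and observe that the set on which the subsequence fails to tend to $\infty$ is contained in a $\limsup$ of sets whose capacities are summable). A diagonal argument across $m$ then furnishes a single subsequence $(S_{n_k}(f))_k$ converging pointwise to $\infty$ on $\C\setminus\overline{\D}$ outside a countable union of polar sets, which is itself polar and hence of outer logarithmic capacity zero. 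The only step with genuine content is the combination of the two quoted theorems; the pointwise refinement is a routine potential-theoretic exercise, so no real obstacle is anticipated.
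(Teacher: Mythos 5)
Your proposal is correct and follows exactly the paper's route: the paper likewise obtains the corollary by combining Theorem \ref{coef} (which yields an Abel universal function without Hadamard--Ostrowski gaps) with Theorem \ref{thm-Kalmes}, and leaves the standard subsequence extraction for the ``in particular'' part implicit. Your explicit Borel--Cantelli/diagonal argument for that last step is a fine way to fill in what the paper treats as routine.
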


Furthermore, one can state a measure-theoretical version of the previous corollary using ideas of Kahane and Melas \cite{KahaneMelas2001}. Indeed, the proof of \cite[Theorem 2]{KahaneMelas2001} shows that given any $f\in H(\D)$ there exists a function $g$ in the Wiener algebra $W$, of power series that are absolutely convergent in $\overline{\D}$, such that $S_n(f+g)(z)\to \infty$ for $\lambda$-a.e. $z\in \C\setminus \overline{\D}$. Since $W\subset A(\D)$ and since $f+g$ obviously belongs to $\UU_A(\D,\rho)$ whenever $f\in \UU_A(\D,\rho)$ and $g\in A(\D)$, we immediately get the following complement of Corollary \ref{coro-div-to-infty-cap}.
	
	\begin{prop}\label{prop-partial-sums-div-a-e-outside-Dbar}For any $f\in\UU_A(\D,\rho)$ there exists $g\in W$ such that $S_n(f+g)\to \infty$ $\lambda$-a.e. in $\C\setminus \overline{\D}$ while $f+g\in \UU_A(\D, \rho)$.
	\end{prop}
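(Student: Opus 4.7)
The plan is to combine two ingredients: the Kahane--Melas construction (cited from \cite{KahaneMelas2001}) that provides, for every $h\in H(\D)$, a Wiener-algebra perturbation whose partial sums blow up almost everywhere outside $\overline{\D}$, together with the observation that the class $\UU_A(\D,\rho)$ is invariant under addition of any function in $A(\D)$. Since $W\subset A(\D)$, applying Kahane--Melas to the given Abel universal $f$ and adding the resulting $g\in W$ will simultaneously produce the desired divergence of partial sums and preserve membership in $\UU_A(\D,\rho)$.

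First I would record the invariance lemma: if $f\in\UU_A(\D,\rho)$ and $g\in A(\D)$, then $f+g\in\UU_A(\D,\rho)$. To see this, fix a proper compact $K\subset\T$ and $\vp\in C(K)$. Since $g$ is continuous on $\overline{\D}$, the function $\vp-g|_K$ lies in $C(K)$, so by Abel universality of $f$ there exists $(n_k)$ with $\|f_{r_{n_k}}-(\vp-g|_K)\|_K\to 0$. Because $g$ is uniformly continuous on the compact set $\overline{\D}$, the dilates $g_{r_{n_k}}$ converge to $g$ uniformly on $\overline{\D}$, and in particular on $K$. Adding these two approximations gives $\|(f+g)_{r_{n_k}}-\vp\|_K\to 0$, proving the claim.

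Then I would invoke the Kahane--Melas theorem as recalled in the paragraph preceding the proposition: for our given $f\in\UU_A(\D,\rho)\subset H(\D)$, there exists $g\in W$ such that $S_n(f+g)(z)\to\infty$ for $\lambda$-a.e.\ $z\in\C\setminus\overline{\D}$. Since $W$ is contained in $A(\D)$ (absolutely convergent Taylor series on $\overline{\D}$ are uniform limits of their partial sums, hence continuous on $\overline{\D}$), the invariance lemma applies and yields $f+g\in\UU_A(\D,\rho)$, completing the proof.

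There is no real obstacle here beyond correctly quoting the Kahane--Melas construction and verifying the uniform continuity step; the main non-trivial work is packaged inside \cite{KahaneMelas2001}, and our contribution is only the elementary observation that $A(\D)$-perturbations preserve Abel universality.
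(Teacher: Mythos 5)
Your proposal is correct and follows exactly the paper's own route: invoke the Kahane--Melas construction to obtain $g\in W$ with $S_n(f+g)\to\infty$ $\lambda$-a.e.\ in $\C\setminus\overline{\D}$, then use $W\subset A(\D)$ and the invariance of $\UU_A(\D,\rho)$ under addition of $A(\D)$-functions. The only difference is that you spell out the invariance step (via uniform convergence of the dilates $g_{r}$ on $\overline{\D}$), which the paper dismisses as obvious.
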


So far, we have proven that, contrary to what happens for universal Taylor series, being Abel universal has no effect on the gap structures of its Taylor coefficients and no influence on the behaviour of its Taylor partial sums on large subsets of $\C\setminus \overline{\D}$. It remains to see if this is still the case on the boundary $\T$ of $\D$, where the most erratic behaviour of the partial sums could be expected. The next result gives an affirmative answer. We recall that $m$ stands for the arclength measure on $\T$.
	
\begin{comment}	The previous show that the influence of the universal property of Abel universal functions on the possible chaotic behaviour of their partial sums outside $\overline{\D}$ is apparently null. In fact this may be not be surprising since the universality of Abel universal functions \emph{a priori} occur only on $\T$. The formula $f(rz)=\sum_{k} a_kr^kz^k=\sum_{k} c_{n,k} S_k(f)$ with $c_{n,k}=r^k(1-r)$, valid for any $f\in H(\D)$, any $r\in (0,1)$ and $z\in\T$, shows that a wild radial boundary behaviour of $f$ may have an effect of the behaviour of the partial sums of $f$ on $\T$. Indeed, it implies that if $f\in \UU_A(\D)$, then $\{S_n(f)(z):\,n\in \N\}$ must be unbounded for any $z\in \T$.  The next theorem lets us to think that, in some sense, this is only effect of Abel universality on a possible wild behaviour of the partial sums of $f$ on $\T$.\end{comment} 
	
	\begin{theorem}\label{thm-partial-sums-div-a-e-T}There exists $f\in\UU_A(\D,\rho)$ such that $S_n(f)\to \infty$ $m$-a.e. on $\T$.
	\end{theorem}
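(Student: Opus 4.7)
The plan is to construct the function $f \in \UU_A(\D,\rho)$ directly by a Mergelyan-based induction, interleaving the usual ``universality'' polynomials from Theorem \ref{coef} with auxiliary polynomials that force the partial sums of $f$ to diverge in modulus at almost every point of $\T$. Fixing the standard enumerations $(\vp_n)_n,(K_n)_n,(\e_n)_n$ and functions $\alpha,\beta$ from Section \ref{notation} together with $\rho=(r_n)_n$, I will build $f = \sum_n (P_n + R_n) \in H(\D)$ where $P_n$ is a Mergelyan polynomial supported in a degree range $[u_n,v_n]$ handling the Abel universality approximation of $\vp_{\alpha(n)}$ on $K_{\beta(n)}$ by the dilate $f_{r_{v_n}}$, and $R_n$ is an auxiliary polynomial supported in a disjoint higher range $(v_n,w_n]$ with $w_n < u_{n+1}$, whose role is to push $|S_k(f)(\zeta)|$ above level $n$ throughout $(v_n,w_n]$ on a set $E_n\subset\T$ of measure at least $2\pi-2^{-n}$.

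The key technical lemma I will need is: for every $N\in\N$, every $\delta>0$ and all sufficiently large integers $v<w$, there exist a polynomial $R(z)=\sum_{k=v+1}^{w}c_k z^k$ and a set $E\subset\T$ with $m(\T\setminus E)\leq\delta$ such that $|S_k(R)(\zeta)|\geq N$ for every $k\in(v,w]$ and every $\zeta\in E$. Such an $R$ can be assembled by concatenating localized Kolmogorov-type ``spike'' blocks: each block is supported on a short consecutive range of degrees and forces the partial sums to jump by a prescribed amount on a small arc of $\T$, while being negligible on its complement. By chaining enough blocks, one covers $\T$ up to measure $\delta$ and maintains the lower bound through the whole range $(v,w]$. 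Placing the coefficients at high degrees (i.e.\ taking $v$ large) makes $R$ negligible on any disc $r_{u_{n+1}}\overline{\D}$, so that $R_n$ does not disturb the convergence of $\sum_j(P_j+R_j)$ in $H(\D)$, nor the Abel universality approximation already secured by the subsequent $P_j$'s.

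Applying the lemma at stage $n$ with $\delta=2^{-n}$ and $N$ exceeding $n+\sup_{\zeta\in\T}\left|\sum_{i<n}(P_i+R_i)(\zeta)+P_n(\zeta)\right|$, I obtain $R_n$ and $E_n$. Since the Taylor supports are pairwise disjoint, for $k\in(v_n,w_n]$,
\[
S_k(f)(\zeta)=\sum_{i<n}(P_i+R_i)(\zeta)+P_n(\zeta)+S_k(R_n)(\zeta),
\]
so the triangle inequality gives $|S_k(f)(\zeta)|\geq n$ for $\zeta\in E_n$. Since $\sum_n 2^{-n}<\infty$, Borel--Cantelli implies that almost every $\zeta\in\T$ lies in $E_n$ for all but finitely many $n$, yielding $|S_k(f)(\zeta)|\to\infty$ along all indices $k$ in the ranges $\bigcup_n (v_n,w_n]$. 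The remaining intermediate partial sums $S_k(f)$ for $k\in[u_n,v_n]$ (in the $P_n$-range) are kept large by insisting that the Mergelyan step produce a $P_n$ with small sup norm on $\T$; since $S_{w_{n-1}}(f)(\zeta)$ is already of modulus at least $n-1$ on $E_{n-1}$, a bound like $\|P_n\|_\T\leq 1$ suffices to preserve divergence on $E_{n-1}\cap E_n$, which has full measure as $n\to\infty$.

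The main obstacle is producing the polynomial $R$ in the key lemma with \emph{every} intermediate partial sum uniformly large on a set of measure $\geq 2\pi-\delta$. A naive random-sign choice is insufficient, because at each fixed $\zeta$ the partial sums of $\sum\varepsilon_k\zeta^k$ form a recurrent planar random walk and return near zero infinitely often almost surely. The required deterministic spike construction, reminiscent of Kolmogorov's trigonometric polynomials with everywhere-divergent partial sums, must ensure ``one-way'' growth of the partial sums on the selected arcs, rather than oscillatory cancellation, and must simultaneously be placed at high enough degree to be invisible to the Abel universality mechanism of the $P_n$ blocks.
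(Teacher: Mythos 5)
Your high-level skeleton (interleave the Mergelyan universality blocks with divergence-forcing coefficients, then apply Borel--Cantelli to exceptional sets of summable measure) is the same as the paper's, but the mechanism you propose for making the partial sums large has a genuine gap, and in fact one of your steps is impossible. You place the correcting polynomial $R_n$ in a degree range \emph{disjoint} from that of $P_n$, and to control the partial sums $S_k(f)$ for $k$ in the $P_n$-range you propose to ``insist that the Mergelyan step produce a $P_n$ with small sup norm on $\T$,'' e.g.\ $\|P_n\|_\T\leq 1$. This cannot be done: by the maximum principle $\|P_n\|_{r_n\overline{\D}}\leq\|P_n\|_\T$, while condition (e) requires $P_n(r_n\cdot)$ to approximate $\vp_{\alpha(n)}-\sum_{j<n}(P_j+Q_j)(r_n\cdot)$ on $K_{\beta(n)}$, a target whose sup norm is unbounded over $n$ (take $\vp_{\alpha(n)}$ a large constant). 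So $\|P_n\|_\T$ is necessarily uncontrolled, and the intermediate partial sums inside the $P_n$-block can collapse back near $0$ on large subsets of $\T$; your construction gives no way to prevent this. Separately, your ``key lemma'' is either trivial (in isolation, $R(z)=Nz^{v+1}$ already makes every partial sum of $R$ have modulus $N$ on all of $\T$, so no Kolmogorov-type machinery is needed) or, in the form actually required --- dominating the uncontrolled oscillation of $S_k(P_n)$ throughout the $P_n$-range --- not addressed at all; you correctly flag it as the main obstacle but do not resolve it.

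The paper's proof resolves exactly this point by putting the correction $Q_n=\sum_{k=u_n}^{\deg P_n}b_{k,n}z^k$ in the \emph{same} degree range as $P_n$ and choosing the coefficients $b_{l,n}$ one at a time, adaptively: having fixed everything up to degree $l-1$ and the coefficient $a_{l,n}$ of $P_n$, one considers the sets $G_l(c)=\{w\in\T: |S_{l-1}(f)(w)+(a_{l,n}+c)w^l|\leq l\}$ and observes that $G_l(c)\cap G_l(c')\neq\emptyset$ forces $|c-c'|\leq 2l$. Picking $s_l\geq l^2$ points in $\overline{D(0,l^4)}$ pairwise separated by more than $2l$ makes the corresponding $G_l(c_j)$ disjoint, so some $G_l(c_{i_l})$ has measure at most $2\pi/l^2$; setting $b_{l,n}=c_{i_l}$ guarantees $|S_l(f)|\geq l$ off a set of measure $O(l^{-2})$, the bound $|b_{l,n}|\leq l^4$ keeps $\sum_nQ_n$ in $H(\D)$ and keeps $|Q_{n}(r_{n}z)|\leq\sum_k k^4r_{n}^k$ small enough to preserve Abel universality, and Borel--Cantelli finishes the argument. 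This per-degree pigeonhole is the idea your proposal is missing; without it (or a proved substitute controlling the partial sums inside the Mergelyan blocks), the argument does not go through.
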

	
	\begin{proof}Let $\rho=(r_n)_n$. We fix a sequence $(R_n)_n$ in $(0,1)$ such that $R_n<r_n <R_{n+1}< r_{n+1}$ for any $n\in\N$. We will build $f$ by induction as a sum $\sum _n (P_n+Q_n)$ where $P_n$ and $Q_n$ are two polynomials that will take the following forms
		\[
		P_n=\sum_{k=u_n}^{\text{deg}(P_n)}a_{k,n}z^k,\quad Q_{n}=\sum_{i=u_{n}}^{\text{deg}(P_n)}b_{k,n}z^k,
		\]
		where $(u_n)_n$ is an increasing sequence of integers satisfying some additional growth condition, that will also be defined by induction. At each step $n$, we first define $u_n$, then comes $P_n$ and third we choose $Q_n$. To start with, we set $P_0\equiv Q_0\equiv 0$ and $u_0=0$. For the inductive step, let us assume that $(u_0,P_0,Q_0),\ldots,(u_{n-1},P_{n-1},Q_{n-1})$ have been built. We shall define $u_n$, $P_n$ and $Q_n$ as follows.
		
		We choose $u_n\in \N$ such that the following conditions are satisfied.
		\begin{enumerate}[(a)]
		\item \label{itema} $u_n\geq \max\{2,\text{deg}(P_{n-1})+1\}$;
		\item \label{itemb} $\sum_{j\geq u_n}\sum_{k\geq j}k^4r_n^k\leq \varepsilon_n$;
		\item \label{itemc} $(r_n/R_{n+1})^{u_n}\leq 1-r_n/R_{n+1}$.
		\end{enumerate}
		Note that \eqref{itemb} is possible since $\sum_{j\geq n}\sum_{k\geq j}k^4r^k\to 0$ as $n\to \infty$ for any $r\in (0,1)$ by Lemma \ref{doublesum}. Then, we apply Mergelyan's theorem to define $P_n(z)=\sum_{k\geq u_n}a_{k,n}z^k$ as a polynomial so that
		\begin{enumerate}[(a)]
			\item[(d)] \label{item1-cv} $\sup_{|z|\leq R_n} |P_n(z)| \leq \varepsilon_n$;
			\item[(e)] \label{item2-univ} $\sup_{z\in K_{\beta(n)}}|P_n(r_nz)-\left(\vp_{\alpha(n)}(z)-\sum_{0\leq j\leq n-1}(P_j+Q_j)(r_nz)\right)|\leq \e_n$
		\end{enumerate}
		It remains to define $Q_n$, which is the most important step of the construction. To proceed, we will build by a finite induction $\text{deg}(P_n)-u_n +2$ complex numbers $b_{u_n-1,n}$, $b_{u_n,n},\ldots,b_{\text{deg}(P_n),n}$. First we set $b_{u_n-1,n}=0$. Let us then assume that $b_{u_n-1,n}$, $b_{u_n,n},\ldots,b_{l-1,n}$ have been chosen for some $l\in \{u_n,\ldots,\text{deg}(P_n)\}$. To define $b_{l,n}$ we proceed as follows.
		
		For any complex number $c$, we set
		\[
		G_l(c)=\left\{w\in \T:\,\left|\sum_{j=0}^{n-1}(P_j+Q_j)(w)+\sum_{k=u_n}^{l-1}(a_{k,n}+b_{k,n})w^k +a_{l,n}w^l + cw^l\right|\leq l \right\}.
		\]
		
		We observe that $G_n(c)\cap G_n(c')\neq \emptyset$ implies that $|c-c'|\leq 2l$. Observe also that there exists an integer $s_l\geq l^{2}$ and $c_1,\ldots,c_{s_l}$ in the disc $\overline{D(0,l^4)}$ such that $|c_j-c_k|>2l$ for any $j\neq k$ (recall that $l\geq u_n\geq 2$). Therefore the sets $G_l(c_1),\ldots,G_l(c_{s_l})$ are pairwise disjoint closed subsets of $\T$, which implies that, for some $i_l\in \{1,\ldots,c_{s_l}\}$,
		\[
		m\left(G_l(c_{i_l})\right)\leq \frac{m(\T)}{s_l}\leq \frac{2\pi}{l^{2}}.
		\]
		We set $b_{l,n}=c_{i_l}$ to conclude this finite induction and
		\[
		Q_n=\sum_{k=u_n-1}^{\text{deg}(P_n)}b_{k,n}z^k=\sum_{k=u_n}^{\text{deg}(P_n)}b_{k,n}z^k
		\]
		to conclude the whole induction.
		
		Now we have to check that the power series $f=\sum_n(P_n+Q_n)$ satisfies the desired properties. First note by (\hyperref[item1-cv]{d}) that the series $\sum_nP_n$ defines a function in $H(\D)$. Moreover, the construction ensures that for every $n$ and every $k$ we have $|b_{k,n}|\leq k^4$ and so $\sum_n Q_n$ also belongs to $H(\D)$. Thus $f$ is well-defined and belongs to $H(\D)$.
		
		Let us prove that for $m$-a.e. $z\in \T$, $S_N(f)(z)\to \infty$ as $N\to \infty$. Set
		\[
		I=\cup_{n\geq 1}\{u_n,u_n+1,\ldots,\text{deg}(P_n)\}.
		\]
		The construction of the $b_{k,n}$ gives us that for any $k\in I$ and any $z\in \T\setminus G_k(b_{k,n})$, we have $|S_k(f)(z)|\geq k$, where $n$ is such that $u_n\leq k \leq \text{deg}(P_n)$. Since $\sum_km\left(G_k(b_{k,n})\right)\leq \sum_k\frac{2\pi}{k^{2}}<\infty$, by the Borel-Cantelli Theorem, $m$-a.e. $z\in \T$ belongs to all but finitely many sets $\T\setminus G_k(b_{k,n})$. Thus for $m$-a.e. $z\in\T$, $S_N(g)(z)\to \infty$ as $N\to \infty$, $N\in I$. Now, for every $n\geq 1$ and every $N\in \{\text{deg}(P_n),\ldots,u_{n+1}\}$, we have $S_N(f)(z)=S_{\text{deg}(P_n)}(f)(z)$ for any $z\in \T$. Therefore $S_N(f)(z)\to \infty$ as $N\to \infty$.
		
		To finish the proof, it remains to check that $f$ belongs to $\UU_A(\D,\rho)$. Let us fix $n,m\in \N$ and let $(n_l)_l\subset \N$ be such that for any $l\in \N$, $\alpha(n_l)=n$ and $\beta(n_l)=m$. By (\hyperref[item2-univ]{e}) we have, for any $z \in K_m$,
		\begin{equation}\label{eq-Abel-univ-Sn-infty}
			\left|f(r_{n_l}z)-\vp_n(z)\right| \leq \e_{n_l} + \left|Q_{n_l}(r_{n_l}z)\right| + \left|\sum _{j\geq n_l+1} \left(P_j(r_{n_l}z)+Q_j(r_{n_l}z)\right)\right|.
		\end{equation}
		
		By the construction of $Q_n$ (and $b_{k,n}$) and the choice of $u_n$ (see \eqref{itemb}), we have for any $z\in \T$,
		\[
		\left|Q_{n_l}(r_{n_l}z)\right|\leq \sum_{k=u_{n_l}}^{\text{deg}(P_{n_l})}k^4r_{n_l}^k\leq \varepsilon_{n_l}.
		\]
		
		To deal with the third term of the right hand-side of \eqref{eq-Abel-univ-Sn-infty}, we derive first from (\hyperref[item1-cv]{d}) and Cauchy's inequalities that for any $n\geq 1$ and any $k\in \{u_n,\ldots,\text{deg}(P_n)\}$, $|a_{k,n}|\leq \varepsilon_nR_n^{-k}$.	Then, together with the definition of the $b_{k,n}$, conditions \eqref{itemb} and \eqref{itemc}, we obtain, for any $z \in K_{m}$,
		
		\begin{eqnarray*}
			\left|\sum_{j\geq n_l+1} \left(P_j(r_{n_l}z)+Q_j(r_{n_l}z)\right)\right| & \leq & 
			\sum_{j\geq n_l+1}\sum_{k\geq u_{j}}\e_{j}\left(\frac{r_{n_l}}{R_{j}}\right)^k+ \sum_{j\geq n_l+1}\sum_{k\geq u_{j}}k^4r_{n_l}^k
			\\& \leq & 
			\sum_{j\geq n_l+1}\e_{j}\frac{\left(r_{n_l}/R_{n_l+1}\right)^{u_{n_l}}}{1-r_{n_l}/R_{n_l+1}}+
			\sum_{j\geq u_{n_l}}\sum_{k\geq j}k^4r_{n_l}^k
			\\& \leq & \sum_{j\geq n_l+1}\e_{j} + \varepsilon_{n_l}.
		\end{eqnarray*}
		
		Altogether we get
		\[
		\sup_{z\in K_m}\left|f(r_{n_l}z) -\vp_n (z) \right|\rightarrow 0\quad\text{as }l\to \infty,
		\]
		and so $f\in \UU_A(\D,\rho)$.
	\end{proof}

\begin{remark}{\rm We note that the previous result allows us to see that Abel's limit theorem strongly fails if we replace the assumption ``$S_n(f)(\zeta)$ converges in $\C$" by ``$S_n(f)(\zeta)\to \infty$", in the sense that not only $f(r\zeta)$ may not tend to $\infty$, but $f([0,\zeta))$ can even be dense in $\C$.}
\end{remark}
	
	Combining Proposition \ref{prop-partial-sums-div-a-e-outside-Dbar} with Theorem \ref{thm-partial-sums-div-a-e-T} we get the following:
	
	\begin{cor}\label{cor-sim-div-infty}There exists an Abel universal function $f$ such that
			 $S_n(f) \to \infty$ $\lambda$-a.e. on $\C\setminus \overline{\D}$ and
			 $S_n(f) \to \infty$ $m$-a.e. on $\T$.
		
	\end{cor}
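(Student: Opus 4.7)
The plan is to chain the two preceding results. Start with a function $f_0 \in \UU_A(\D,\rho)$ provided by Theorem \ref{thm-partial-sums-div-a-e-T}, so that $S_n(f_0)(\zeta) \to \infty$ for $m$-a.e.\ $\zeta \in \T$. Then apply Proposition \ref{prop-partial-sums-div-a-e-outside-Dbar} to this $f_0$ to obtain $g \in W$ with the properties that $f := f_0 + g \in \UU_A(\D,\rho)$ and $S_n(f)(z) \to \infty$ for $\lambda$-a.e.\ $z \in \C \setminus \overline{\D}$.

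The only thing left to verify is that adding $g$ does not destroy the boundary property of $f_0$ on $\T$. This is where membership in the Wiener algebra $W$ is decisive: if $g(z) = \sum_k c_k z^k$ with $\sum_k |c_k| < \infty$, then $S_n(g) \to g$ uniformly on $\overline{\D}$, and in particular there exists a constant $C > 0$ such that $\sup_{z \in \T} |S_n(g)(z)| \leq C$ for every $n$. Since $S_n(f) = S_n(f_0) + S_n(g)$, at every point $\zeta \in \T$ where $S_n(f_0)(\zeta) \to \infty$ we also have $S_n(f)(\zeta) \to \infty$ by the triangle inequality. As the exceptional set on $\T$ for $f_0$ has $m$-measure zero, the same holds for $f$.

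The construction therefore consists of a single function $f = f_0 + g$ satisfying simultaneously both divergence statements, which proves the corollary. There is no real obstacle here beyond the observation that $W$-perturbations are uniformly harmless on $\overline{\D}$; the entire work has already been carried out in the two cited statements.
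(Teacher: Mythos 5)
Your proposal is correct and follows exactly the route the paper intends: Corollary \ref{cor-sim-div-infty} is stated there as an immediate combination of Theorem \ref{thm-partial-sums-div-a-e-T} and Proposition \ref{prop-partial-sums-div-a-e-outside-Dbar}. Your explicit observation that a Wiener-algebra perturbation $g$ has uniformly bounded partial sums on $\overline{\D}$ (since $\sup_n\sup_{z\in\overline{\D}}|S_n(g)(z)|\leq\sum_k|c_k|<\infty$), and hence cannot destroy the divergence to $\infty$ on $\T$, is precisely the small verification the paper leaves implicit.
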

	
	In Theorem \ref{thm-partial-sums-div-a-e-T}, the large set of points in $\T$ at which $S_n(f)$ tends to $\infty$ is not prescribed. However, notice that given a countable set $E \in \T$ and any set $A$ of full arclength measure in $\T$, there exists a rotation $r$ (about $0$) such that $r(E)\subset A$. Indeed, for any $z \in E$, the set $\{\zeta \in \T:\,\zeta z \in A\}$ has clearly full measure so that
	\[
	m(\bigcap_{z\in E}\{\zeta \in \T:\,\zeta z \in A\})=2\pi.
	\]
	Thus if we let $r$ be the rotation (multiplication) by any element of $\cap_{z\in E}\{\zeta \in \T:\,\zeta z \in A\}$ and let $f$ be an Abel universal function such that $S_n(f)(z)\to \infty$ for every $z$ in some set $A\subset \T$ with full measure, then $g:=f\circ r$ also belongs to $\UU_A(\D)$ and $S_n(g)(z)\to \infty$ for every $z\in E$. Therefore we have the following consequence of Theorem \ref{thm-partial-sums-div-a-e-T}.

	\begin{cor}\label{thm-count-div-infty}For any countable set $E\subset \T$, there exists $f\in\UU_A(\D,\rho)$ such that $S_n(f)(z)\to \infty$ for every $z\in E$.
	\end{cor}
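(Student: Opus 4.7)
The plan is to deduce this directly from Theorem \ref{thm-partial-sums-div-a-e-T} by a rotation argument, exploiting that both the class $\UU_A(\D,\rho)$ and the divergence condition on $S_n$ transform predictably under the map $z\mapsto \zeta_0 z$ for any $\zeta_0 \in \T$.

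First I would invoke Theorem \ref{thm-partial-sums-div-a-e-T} to get some $f_0\in\UU_A(\D,\rho)$ together with a set $A\subset\T$ of full arclength measure on which $S_n(f_0)(z)\to\infty$. The key observation is then that for every single $z\in E$, the set
\[
A_z := \set*{\zeta\in\T : \zeta z \in A}
\]
has full $m$-measure, since $\zeta\mapsto \zeta z$ is a measure-preserving homeomorphism of $\T$. Because $E$ is countable, the intersection $\bigcap_{z\in E} A_z$ still has full (hence positive) measure and is in particular non-empty; pick any $\zeta_0$ in it.

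Next I would define $f(z):=f_0(\zeta_0 z)$. Writing $f_0(z)=\sum_k a_k z^k$ one has $f(z)=\sum_k a_k\zeta_0^k z^k$, so $S_n(f)(z)=S_n(f_0)(\zeta_0 z)$. By the choice of $\zeta_0$, for every $z\in E$ the point $\zeta_0 z$ lies in $A$, whence $S_n(f)(z)=S_n(f_0)(\zeta_0 z)\to\infty$, which is exactly the desired conclusion.

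It remains to check that $f\in\UU_A(\D,\rho)$; this is the only thing to verify, and it is routine. For a compact set $K\subsetneq\T$ and $\vp\in C(K)$, observe that $f_r(z)=f(rz)=f_0(\zeta_0 rz)=(f_0)_r(\zeta_0 z)$, so
\[
\n f_{r_n}-\vp\n_K = \n (f_0)_{r_n}-\tilde\vp\n_{\zeta_0 K},
\]
where $\tilde\vp(w):=\vp(\zeta_0^{-1}w)$ is continuous on the compact set $\zeta_0 K\subsetneq \T$. Applying the universality of $f_0$ to $\tilde\vp$ on $\zeta_0 K$ yields a subsequence of $\rho$ along which the left-hand side tends to $0$, so $f\in\UU_A(\D,\rho)$. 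No step is really an obstacle here: the whole argument is essentially a Fubini-style selection of a rotation that pushes the countable set $E$ into the almost-everywhere divergence set supplied by Theorem \ref{thm-partial-sums-div-a-e-T}.
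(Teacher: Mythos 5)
Your proposal is correct and is essentially identical to the paper's own argument: the paper likewise selects a rotation $\zeta_0$ from the full-measure intersection $\bigcap_{z\in E}\{\zeta\in\T:\zeta z\in A\}$, where $A$ is the divergence set from Theorem \ref{thm-partial-sums-div-a-e-T}, and composes with that rotation. Your explicit verification that rotation preserves membership in $\UU_A(\D,\rho)$ is a detail the paper leaves to the reader, but the route is the same.
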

	
	The proof of this corollary is based on Theorem \ref{thm-partial-sums-div-a-e-T} that relies on the Borel-Cantelli lemma. However one can easily adapt the proof of Theorem \ref{thm-partial-sums-div-a-e-T} in order to get Corollary \ref{thm-count-div-infty} without making use of any probabilistic tool. It suffices to use instead the following geometric lemma (the details of the construction of the desired Abel universal function are left to the reader):
	
	\begin{lemma}\label{lem1}
		Let $w_1,\dots,w_l\in\C,z_1,\dots,z_l\in\T$ and $R>0$. There exists $b\in\mathbb{C}$ with $|b|\leq2lR$ such that $|w_m+bz_m|\geq R$ for $m=1,\dots,l$.
	\end{lemma}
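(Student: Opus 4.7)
The plan is to convert the lemma into a geometric covering/pigeonhole argument in the $b$-plane. For each index $m$, the condition $|w_m + bz_m| < R$ defines an open disc in $b$, namely (using $|z_m|=1$ so $|w_m + bz_m| = |b - (-w_m\bar{z_m})|$) the disc $D_m := D(-w_m\bar{z_m}, R)$ of radius $R$. So finding the required $b$ amounts to exhibiting a point in $\overline{D(0,2lR)}$ that lies outside the union $\bigcup_{m=1}^{l} D_m$.

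The key observation is that each $D_m$ has diameter $2R$, so any two distinct points at mutual distance strictly greater than $2R$ cannot simultaneously belong to the same $D_m$. I would therefore pick $l+1$ test points inside $\overline{D(0,2lR)}$ with pairwise distances $>2R$. Concretely, place the points $b_k := -2lR + 4Rk$ on the real axis for $k = 0, 1, \dots, l$; these $l+1$ points lie in $[-2lR, 2lR]$, so each satisfies $|b_k| \leq 2lR$, and any two of them are at distance at least $4R > 2R$.

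Now apply pigeonhole: each of the $l$ bad discs $D_1, \dots, D_l$ contains at most one of the $l+1$ test points, so at least one $b_{k_0}$ lies outside every $D_m$. For this $b := b_{k_0}$ we have $|b| \leq 2lR$ and $|w_m + bz_m| \geq R$ for every $m = 1, \dots, l$, which is exactly what is required.

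I do not anticipate any real obstacle; the only minor issue is making sure that the test points genuinely fit inside the allowed disc with strict enough spacing, and the explicit choice above settles this. The argument essentially mirrors the spacing/pigeonhole step carried out inside the proof of Theorem \ref{thm-partial-sums-div-a-e-T} (where the bound $s_l \geq l^2$ was used together with pairwise distance $>2l$), isolated here in purely geometric form so that the adaptation of that proof to a prescribed countable set $E$ in Corollary \ref{thm-count-div-infty} becomes routine.
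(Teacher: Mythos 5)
Your proof is correct, but it follows a genuinely different route from the paper's. You work in the $b$-plane: you identify the forbidden set for the $m$-th constraint as the open disc $D(-w_m\bar{z_m},R)$ of radius $R$, choose $l+1$ candidate points in $[-2lR,2lR]$ with pairwise separation $4R$, and conclude by pigeonhole that some candidate escapes all $l$ forbidden discs. The paper instead works with the moduli $|w_1|,\dots,|w_l|$: it considers the $l+1$ regions consisting of the disc $D(0,R)$ and the annuli $\{z:(2n-1)R\le|z|<(2n+1)R\}$ for $n=1,\dots,l$, observes that the $l$ points $w_m$ can meet at most $l$ of these regions, and takes $b=2nR$ for an $n$ whose annulus is empty; the reverse triangle inequality $|w_m+bz_m|\ge\bigl||w_m|-|b|\bigr|\ge R$ then finishes. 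Both are $(l+1)$-versus-$l$ pigeonhole arguments of comparable length. Yours has the merit of being exactly the separated-points device already used inside the proof of Theorem \ref{thm-partial-sums-div-a-e-T} (where $s_l\ge l^2$ points at mutual distance $>2l$ are placed in $\overline{D(0,l^4)}$), so it makes the intended adaptation to Corollary \ref{thm-count-div-infty} transparent; the paper's version only ever needs the modulus of $b$ and a one-dimensional list of candidates $0,2R,\dots,2lR$. A minor remark: separation $\ge 2R$ between your test points would already suffice, since two points at distance $\ge 2R$ cannot both lie in an open disc of radius $R$; your $4R$ spacing is safe but not necessary.
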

	
	\begin{proof}
		Clearly, if $|w_m|\geq R$ for $m=1,\dots,l$, then we can select $b=0$ and so we may assume that at least one of the numbers $w_1,\dots,w_l$ belongs to $D(0,R)$. Without loss of generality, we assume that $|w_1|<R$. Now, if $|w_m|\geq3R$ for $m=2,\dots,l$, then we can select $b=2R$. Thus, we may assume that at least one of the numbers $w_2,\dots,w_l$ belongs to $D(0,3R)$. Continuing in this way, we notice that the worst case is when the disc $D(0,R)$ and each annulus $\{z\in\C:(2n-1)R\leq|z|<(2n+1)R\}$ for $n=1,\dots,l-1$ contain exactly one of $w_1,\dots,w_l$. In this case we can select $b=2lR$.
	\end{proof}

	\begin{remark}{\rm 
		Note that the sets of functions in $\UU_A(\D,\rho)$ with the properties described in the results of this section are dense in $H(\D)$ since they are invariant under addition with a polynomial. However, they are of first category in $H(\D)$ because they all consist of functions that are \textit{not} universal Taylor series. (Recall that a subset $F$ of a topological space $X$ is said to be of first category in $X$ if $F$ can be written as a countable union of subsets which are nowhere dense in $X$.)
	}
	\end{remark}

	The following questions remain open:
	\begin{questionss}{\rm (i) Is it true that for any Abel universal function $f$ there exists $z\in \T$ such that $\{S_n(f)(z):\,n\in \N\}$ is dense in $\C$?
			
			(ii) In the opposite direction, does there exist an Abel universal function $f$ such that $S_n(f)\to \infty$ everywhere on $\T$?
		
		(iii) If the answer in (ii) is negative, in Corollary \ref{thm-count-div-infty}, can we replace \textit{countable} by \textit{first category}?
	}
	\end{questionss}
	
\section{Further developments}\label{section-Further}

In this final section, we will focus on extensions of the results obtained in the previous parts to other types of universal functions, that are naturally related to Abel universal functions. We mentioned in the introduction three such classes of functions in $H(\D)$ with a wild boundary behaviour. Let us recall the definitions. For $f\in H(\D)$ and $\gamma:[0,1)\to\D$ a path in $\D$ with $\gamma(r)\to \zeta$ as $r\to 1$ for some $\zeta\in\T$, the cluster set $C_{\gamma}(f)$ of $f$ along $\gamma$ is defined by
\[
C_{\gamma}(f)=\left\{w\in\mathbb{C}\cup\{\infty\}:\,w=\lim_{n\to\infty}f\left(\gamma(r_n)\right) \text{for some }(r_n)_n \text{ in } [0,1)\text{ with }r_n\to1\right\}.
\]
We will also denote by $r_{\zeta}:= \{r\zeta:\,r\in [0,1)\}$ the radius through $\zeta$ and by $C_{r_{\zeta}}(f)$ the radial cluster set through the radius $r_{\zeta}$.  Finally, we say that a cluster set is maximal if it is equal to $\mathbb{C}\cup\{\infty\}$. 
\begin{definition}\label{classes}The sets $\UU_{R}(\D)$, $\UU_{\Gamma}(\D)$ and $\UU_{ae}(\D)$ consist of all functions $f$ in $H(\D)$ that satisfy the following properties respectively:
	\begin{enumerate}[(a)]
		\item $C_{r_{\zeta}}(f)$ is maximal for any $\zeta \in \T$;
		\item $C_{\gamma}(f)$ is maximal for any path $\gamma$ in $\D$ with an endpoint in $\T$;
		\item for every $m$-measurable function $\vp$ on $\T$, there exists a sequence $(r_n)_n$ in $[0,1)$ such that $f_{r_n}(\zeta) \to \vp(\zeta)$ for $m$-a.e. $\zeta \in \T$.
	\end{enumerate}
\end{definition}
We recall that $\UU_{R}(\D)$, $\UU_{\Gamma}(\D)$ and $\UU_{ae}(\D)$ were proved to be residual in \cite{KierstSzpilrajn1933}, \cite{BernalGonzalezCalderonMorenoPradoBassas2004} and \cite{Bayart2005} respectively (see also \cite{BoivinGauthierParamonov2002} for the establishment of the existence of functions in $\UU_{\Gamma}(\D)$). It is easy to see that we have the following inclusions:
\[
\UU_A(\D)\subset \UU_{\Gamma}(\D) \subset \UU_{R}(\D) \quad \text{and} \quad \UU_A(\D)\subset \UU_{ae}(\D).
\]
It was proved in \cite{CharpentierMouze2022} that the inclusion $\UU_A(\D)\subset \UU_{\Gamma}(\D)$ is strict. 

It is natural to wonder whether all functions in these three classes satisfy the same wild boundary properties with Abel universal functions. On the one hand, it is clear that the set $\UU_{\Gamma}(\D)$ and the MacLane class do not intersect. Thus functions in $\UU_A(\D)$ must grow fast near $\T$ (see Theorem \ref{Hornblower}). On the other hand, Bayart \cite{Bayart2005} in fact proved that given any growth function $w:[0,1)\to [1,\infty)$ with $w(r)\to \infty$ as $r\to 1$, there exists $f\in \UU_{ae}(\D)$ such that $M(r,f)\leq w(r)$. We also point out that, so far, it is not clear whether $\UU_R(\D)$ may intersect the MacLane class or not. Thus we cannot use Theorem \ref{Hornblower} to make conclusions about the minimal growth of $M(r,f)$ for functions $f$ in $\UU_R(\D)$.
Moreover, one can easily check that the functions in $\UU_{R}(\D)$ satisfy the assumptions of Theorem \ref{thm-BarthRippon-cons} (a) and (b) and that the functions in $\UU_{ae}(\D)$ satisfy the assumptions of Theorem \ref{thm-BarthRippon-cons} (a), but it is not clear if they satisfy the assumptions of (b). Thus, we don't have any information about a Picard-type property that functions in $\UU_{ae}(\D)$ may satisfy. 
However we can derive the following result, using tools from potential theory, as in \cite[Corollary 6]{GardinerManolaki2016UDS}.

\begin{theorem}\label{thm-Uae(D)}
	Let $f\in H(\mathbb{D})$ be such that $\{ f(rw): 0\leq r <1 \}$ is unbounded for a.e. $w\in\T$. Then for all $\zeta\in \T$ and $r>0$, the set $f(D_{\zeta, r})$ has polar complement, where $D_{\zeta, r}=\{ z\in\mathbb{D}: |z-\zeta |<r \}$. 
	In particular, this holds if $f\in\UU_{ae} (\mathbb{D})$.
\end{theorem}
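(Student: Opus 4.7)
The plan is to argue by contradiction, following the potential-theoretic strategy of \cite{GardinerManolaki2016UDS}. I suppose there exist $\zeta_0\in \T$ and $r_0>0$ such that $E:=\C\setminus f(D_{\zeta_0,r_0})$ is non-polar. Equivalently, the domain $\Omega:=\mathbb{C}_\infty \setminus E$ is Greenian, and the strategy is to turn this Greenian structure into a bounded positive superharmonic function on $\Omega$ whose composition with $f$ has Fatou behaviour incompatible with the unboundedness hypothesis.

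Concretely, I fix $z_1\in \Omega\setminus\{\infty\}$, let $G_\Omega(\cdot,z_1)$ denote the Green function of $\Omega$ with pole at $z_1$, and put $v:=\min\{G_\Omega(\cdot,z_1),1\}$. Then $v$ is bounded and positive superharmonic on $\Omega$ with $v(z_1)=1$, and, since $E\cup\{\infty\}$ is non-polar at $\infty$, $v(z)\to 0$ as $z\to\infty$. Because $f$ sends $D_{\zeta_0,r_0}$ into $\C\setminus E\subset\Omega$, the composition $u:=v\circ f$ is bounded positive superharmonic on $D_{\zeta_0,r_0}$. By a Fatou-type theorem for positive superharmonic functions (Littlewood, extended to nontangential convergence by Doob), $u$ admits a finite nontangential limit $L_w$ at $m$-a.e. $w$ in the boundary arc $A:=\T\cap D(\zeta_0,r_0/2)$.

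For $m$-a.e. $w\in A$ the hypothesis yields $s_n\to 1^-$ with $|f(s_nw)|\to\infty$; since $v(z)\to 0$ at $\infty$, it follows that $u(s_nw)\to 0$, so $L_w=0$. I next invoke Plessner's theorem: $m$-a.e. $w\in A$ is either a Fatou point of $f$ or a Plessner point. The Plessner alternative is incompatible with $L_w=0$, because it would produce a sequence $z_n\to w$ nontangentially with $f(z_n)\to z_1$, giving $u(z_n)\to v(z_1)=1$. Hence $f$ has a nontangential limit in $\mathbb{C}_\infty$ at $m$-a.e. $w\in A$; since $\{f(rw): 0\leq r<1\}$ is unbounded, this limit must be $\infty$. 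Consequently $1/f$ is meromorphic on $\D$ and has nontangential limit $0$ on the set $A$, which has positive $m$-measure, contradicting Privalov's uniqueness theorem. The last sentence of the theorem follows at once: if $f\in\UU_{ae}(\D)$, applying the defining property to the constant functions $\vp\equiv N$, $N\in\N$, and taking a countable union of the resulting null exceptional sets, one sees that for $m$-a.e. $w\in\T$ the radial orbit $\{f(rw):0\leq r<1\}$ accumulates at every integer $N$ and is therefore unbounded.

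The main obstacle is the potential-theoretic step: one must know that non-polarity of $E$ alone produces a bounded positive superharmonic $v$ on $\Omega$ that tends to $0$ at $\infty$, and that the composition $v\circ f$ inherits Fatou-type radial/nontangential boundary behaviour on the subdisc $D_{\zeta_0,r_0}$ rather than on $\D$ itself. The transfer of Plessner's and Privalov's theorems to the subdisc is routine, via a conformal model of $D_{\zeta_0,r_0}\cap\D$ or by working with $f$ directly on $\D$ and restricting the analysis to $A\subset\T$; it is the construction and asymptotic behaviour of $v$ that require the non-polarity of $E$ in an essential way.
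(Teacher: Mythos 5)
Your overall strategy (contradiction via a positive superharmonic object on the image domain, composed with $f$ and fed into a Fatou-type theorem) is the right one, but the construction of $v$ breaks down at exactly the point you yourself flag as the main obstacle. Non-polarity of $E=\C\setminus f(D_{\zeta_0,r_0})$ gives you a Green function on $\Omega$, but it gives you \emph{no} control at $\infty$: the set $E$ may perfectly well be compact (for instance $f(D_{\zeta_0,r_0})$ could be $\{|w|>1\}$, the image of $\D$ under $\exp((1+z)/(1-z))$, so that $E=\overline{\D}$), in which case $\infty$ is an interior point of $\C_\infty\setminus E$ (or an isolated, hence irregular, boundary point of $\C\setminus E$) and $G_\Omega(\cdot,z_1)$ stays bounded away from $0$ near $\infty$. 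In fact, by the minimum principle (equivalently, the parabolicity of the plane) there is \emph{no} positive superharmonic function on a planar neighbourhood of $\infty$ that tends to $0$ at $\infty$, so the asserted property $v(z)\to 0$ as $z\to\infty$ is simply unavailable unless $E$ happens to be non-thin at $\infty$, which non-polarity does not guarantee. Without it, the implication $|f(s_nw)|\to\infty \Rightarrow u(s_nw)\to 0$ collapses, and with it the identification $L_w=0$ and the contradiction with the Plessner alternative. A secondary concern: Littlewood's theorem for positive superharmonic functions gives \emph{radial} limits a.e.; almost everywhere \emph{nontangential} convergence can fail for Green potentials, and the Fatou--Na\"{\i}m--Doob theorem yields fine limits rather than nontangential ones, so the comparison of $L_w$ with a nontangential sequence $z_n\to w$ satisfying $f(z_n)\to z_1$ would need further justification even if $v$ behaved as you wish.

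The missing ingredient is Myrberg's theorem (Armitage--Gardiner, Theorem 5.3.8): a plane domain is Greenian, i.e.\ has non-polar complement, if and only if $\log^{+}|z|$ admits a harmonic majorant on it. This is what the paper uses: it yields a positive harmonic $h$ on $f(D_{\zeta,r})$ with $\log|z|\le h(z)$, so $h\circ f$ is a positive \emph{harmonic} function on $D_{\zeta,r}$ majorizing $\log|f|$, and Fatou's theorem then shows that $\log|f|$ is nontangentially bounded above at a.e.\ point of $\T\cap D(\zeta,r)$, contradicting the unboundedness of $\{f(rw):0\le r<1\}$ for a.e.\ $w$ there. The point of Myrberg's theorem is precisely to encode the growth of $|z|$ at $\infty$ into the majorant; your capped Green function carries no such information, which is why your argument cannot close.
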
	

\begin{proof}
	For the sake of contradiction assume that there is $\zeta\in\T$ and $r>0$ such that $f(D_{\zeta, r})$ has non-polar complement. Then, by Myrberg's Theorem (see \cite[Theorem 5.3.8]{ArmitageGardiner2001}), there is a positive harmonic function $h$ on $f(D_{\zeta, r})$ such that $\log |z| \leq h(z)$ there. Thus, $\log |f|$ is majorized by the positive harmonic function $h\circ f$ on $D_{\zeta, r}$, and so (by Fatou's Theorem) it is nontangentially bounded above at a.e. point in $\partial D_{\zeta, r}\cap\T$, which leads to a contradiction. 
\end{proof}

All the previous results can be summarized in the following table, where we use the notation $I(f):=\int_{0}^{1}\log^{+}\log^{+}M(r,f)dr$:

\medskip

\begin{center}
\begin{tabular}{|c|c|c|c|c|}
	\cline{2-5} \cline{3-5} \cline{4-5} \cline{5-5} 
	\multicolumn{1}{c|}{} & $\mathcal{U}_{A}(\mathbb{D})$ & $\mathcal{U}_{\Gamma}(\mathbb{D})$ & $\mathcal{U}_{R}(\mathbb{D})$ & $\mathcal{U}_{ae}(\mathbb{D})$\tabularnewline
	\hline 
	{\footnotesize{}Growth} & {\footnotesize{}$I(f)=\infty$} & {\footnotesize{}$I(f)=\infty$} & {\footnotesize{}?} & {\footnotesize{}arbitrarily slow}\tabularnewline
	\hline 
	{\footnotesize{}Asymp. values} & {\footnotesize{}$\infty$} & {\footnotesize{}$\infty$} & {\footnotesize{}$\infty$} & {\footnotesize{}$\infty$}\tabularnewline
	\hline 
	{\footnotesize{}Normality} & {\footnotesize{}At no $\zeta\in \mathbb{T}$} & {\footnotesize{}At no $\zeta \in \mathbb{T}$} & {\footnotesize{}At no $\zeta \in\mathbb{T}$} & {\footnotesize{}?}\tabularnewline
	\hline 
	{\footnotesize{}Picard points} & {\footnotesize{}Every $\zeta\in\mathbb{T}$} & {\footnotesize{}Every $\zeta\in\mathbb{T}$} & {\footnotesize{}Every $\zeta\in\mathbb{T}$} & {\footnotesize{}
	\begin{tabular}{c}
	? \tabularnewline
	(Theorem \ref{thm-Uae(D)})\tabularnewline
	\end{tabular}}
	\tabularnewline
	\hline 
\end{tabular}
\end{center}

\medskip

\begin{question}{\rm
Is it possible to complete the above table?
}
\end{question}

\medskip

Another interesting generalisation of $\mathcal{U}_A(\D,\rho)$ is given by restricting the universal approximation to a fixed compact subset of $\T$. More precisely:

\begin{definition} Let $\rho=(r_n)_n$ be a sequence in $[0,1)$ such that $r_n\to 1$, and let $K$ be a compact set in $\T$, different from $\T$. We denote by $\mathcal{U}_{A} ^K(\mathbb{D},\rho)$ the set of all functions $f\in H(\D)$ for which the family of dilates $\{f_{r_{n}}:\,n\in\mathbb{N} \}$ is dense in $C(K)$.
\end{definition}

When $K$ is a non-trivial compact arc in $\T$ (different from $\T$), the functions in $\mathcal{U}_{A} ^K(\mathbb{D},\rho)$ have similar behaviour with the functions in $\UU_A(\D,\rho)$ near points of $K$, even if the two classes are different (obviously, $\UU_A(\D,\rho)\subset \mathcal{U}_{A} ^K(\mathbb{D},\rho)$). In particular, the results of this paper that hold for classical Abel universal functions carry over to the elements of the class $\UU_{A} ^K(\mathbb{D},\rho)$, up to superficial modifications depending on $K$. The most interesting case concerns compact sets $K$ that allow functions in $\UU_{A} ^K(\mathbb{D},\rho)$ to belong to classical function subspaces of $H(\D)$. This case was examined in \cite{Maronikolakis2022}, where it was proved that if $m(K)=0$, then $\UU_{A} ^K(\mathbb{D},\rho)\cap H^p$ is a dense $G_{\delta}$ subset of the Hardy space $H^p$ of the unit disc, for $1\leq p<\infty$. Similar results were obtained for the Bergman and the Dirichlet spaces.
 
It would be interesting to examine which properties such Abel universal functions in $H^p$ satisfy, related to their boundary behaviour and the behaviour of their Taylor expansion. For example, since functions in the Hardy space belong to the MacLane class, Corollary \ref{coro-Abel-univ-funct-all-prop} (a) fails for any function in $\UU_{A} ^K(\mathbb{D},\rho)\cap H^p$ (for $m(K)=0$). Nevertheless, one may wonder whether functions of this class still have a typical minimal growth. Moreover, it is known that the set of all functions in $H^p$ with the property that every $\zeta\in\T$ is a Picard point is residual \cite{BrownHansen1972}. Thus, it natural to ask whether every $\zeta\in K$ is a Picard point for every function in $\UU_{A}^K(\mathbb{D},\rho)\cap H^p$. Furthermore, regarding the results of Section \ref{partial-sums}, the following question arises: \emph{given $K\subset \T$ with $m(K)=0$, do the Taylor partial sums of the functions of $\UU_{A} ^K(\mathbb{D},\rho)\cap H^p$ behave chaotically on $K$?} We know that this is the case for most functions in $\UU_{A}^K(\mathbb{D},\rho)\cap H^p$ (see \cite{BeiseMuller2016}). We note that in the opposite direction, it is known that if $K$ is finite, there exists a function $f$ in the disc algebra $A(\mathbb{D})$ whose Taylor partial sums are \textit{universal} on $K$ (see \cite{PapachristodoulosPapadimitrakis2019} for a precise statement). Similarly, under different sufficient conditions on $K$, it is also known that $\UU_{A} ^{K}(\D, \rho)\bigcap A^p\neq\emptyset$ and $\UU_{A} ^{K}(\D, \rho)\bigcap\DD\neq\emptyset$, where $A^p$ is the Bergman space for $1\leq p<\infty$ and $\DD$ is the Dirichlet space (see \cite{Maronikolakis2022}). The analogues of the above questions can be asked for these spaces.

\smallskip

To finish let us focus on the Bloch space $\mathcal{B}$, another classical Banach space of analytic functions, of particular interest for the study of univalent functions. We recall that a function $f\in H(\D)$ belongs to the Bloch space $\mathcal{B}$ if
\[
\|f\|_{\mathcal{B}}:=\sup_{z\in\mathbb{\D}} (1-|z|^2)|f'(z)| <\infty.
\]
We introduce the following definition:

\begin{definition}
Let $K$ be a compact subset of $\T$. We denote by $\UU_{R} ^{K}(\D)$ the set of all functions $f\in H(\D)$ for which the set $C_{r_{\zeta}}(f)$ is maximal for any $\zeta \in K$.
\end{definition}
Observe that $ \UU_{A} ^{K}(\D\ \rho)\subset \UU_{R} ^{K}(\D)$ for any $\rho$ and $K\subset \mathbb{T}$.  The first part of the next theorem shows the contrast between the Bloch space and the other classical spaces mentioned above.

\begin{theorem}\label{bloch}We have the following:
	\begin{enumerate}
		\item[(a)] $\UU_{R}(\D)\bigcap \mathcal{B} =\emptyset$.
		\item[(b)] $\UU_{R} ^{\mathbb{T}\setminus E}(\D)\bigcap \mathcal{B} \neq\emptyset$, for some $E$ in $\mathbb{T}$ with $m(E)=0$.
	\end{enumerate}
\end{theorem}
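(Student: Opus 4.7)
For part (a), the plan is to combine Theorem \ref{Hornblower} (Hornblower) with the classical Lehto--Virtanen theorem on normal functions. Every $f\in\mathcal{B}$ is normal (since $f^{\#}\le|f'|$ yields $(1-|z|^2)f^{\#}(z)\le\|f\|_{\mathcal{B}}$) and satisfies $M(r,f)=O(\log(1/(1-r)))$ by integrating the Bloch estimate along radii; in particular $\int_{0}^{1}\log^{+}\log^{+}M(r,f)\,dr<\infty$, so $f\in\mathcal{A}$ by Hornblower's theorem. Thus $f$ admits an asymptotic value $\alpha\in\widehat{\mathbb{C}}$ at every point $\zeta$ of some dense subset of $\T$. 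The Lehto--Virtanen theorem (cf.\ \cite[Section~9.1]{Pommerenke-univalent}) upgrades asymptotic values of a normal function to angular, and hence radial, limits, so $C_{r_{\zeta}}(f)=\{\alpha\}$ at each such $\zeta$. This singleton cluster set is strictly smaller than $\widehat{\mathbb{C}}$, so $f\notin\UU_{R}(\D)$, which proves (a).

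For part (b), my plan is a random lacunary construction: set $f(z)=\sum_{k\ge 1}\varepsilon_{k}z^{2^{k}}$ with i.i.d.\ Rademacher variables $(\varepsilon_{k})$. Since $(2^{k})$ is Hadamard lacunary and the coefficients are bounded, the standard characterization of lacunary Bloch series gives $f\in\mathcal{B}$ surely, with $\|f\|_{\mathcal{B}}$ bounded by an absolute constant. The heart of the argument is to show that, almost surely, $C_{r_{\zeta}}(f)=\widehat{\mathbb{C}}$ for $m$-a.e.\ $\zeta\in\T$. I would combine two inputs from the theory of random Taylor series: a Salem--Zygmund / Makarov-type law of the iterated logarithm, which delivers $\infty\in C_{r_{\zeta}}(f)$ almost surely for a.e.\ $\zeta$; and a Borel--Cantelli / second-moment argument applied to a countable dense set $\{w_{j}\}\subset\mathbb{C}$, which ensures each $w_{j}\in C_{r_{\zeta}}(f)$ almost surely for a.e.\ $\zeta$. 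Intersecting the resulting countable family of full-measure subsets of $\T$ gives $C_{r_{\zeta}}(f)=\widehat{\mathbb{C}}$ almost surely for a.e.\ $\zeta$, and fixing any realization in this event yields the desired $f\in\mathcal{B}\cap\UU_{R}^{\T\setminus E}(\D)$ with $m(E)=0$.

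The main obstacle lies in the density step of part (b): the events $\{|f(r_{n}\zeta)-w_{j}|<\varepsilon\}$ are not independent in $n$, since the same Rademacher variables reappear in every partial sum. A careful second-moment estimate is needed, exploiting both the independence of the $\varepsilon_{k}$ in $k$ and the lacunarity of the frequencies $2^{k}$ (to decorrelate the Gaussian-like fluctuations at different scales $r_{n}\to 1$), and also a comparison between Abel sums and partial sums that is standard for Hadamard lacunary series. Part (a), by contrast, is an essentially immediate chaining of the two named theorems cited above.
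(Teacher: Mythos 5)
Your part (a) is correct but follows a genuinely different route from the paper. The paper argues that any $f\in\UU_{R}(\D)$ satisfies the hypotheses of Theorem \ref{thm-BarthRippon-cons}(b) (it is bounded on a set $E$ with $E_{NT}=\T$ and unbounded near each point of $\T$), hence is normal at no point of $\T$, while every Bloch function is normal; this is a two-line contradiction. You instead integrate the Bloch estimate to get $M(r,f)=O(\log(1/(1-r)))$, feed this into Hornblower's theorem (Theorem \ref{Hornblower}) to place $f$ in the MacLane class $\AA$, and then use Lehto--Virtanen to upgrade the asymptotic values of the normal function $f$ to radial limits on a dense subset of $\T$, so that $C_{r_\zeta}(f)$ is a singleton there. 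Both arguments are sound; yours is slightly longer but yields the extra information that a Bloch function has radial limits (in $\widehat{\C}$) at a dense set of boundary points, whereas the paper's is a direct application of the Barth--Rippon normality statement it has already set up.

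Part (b) has a genuine gap. The entire difficulty of the statement is showing that $C_{r_\zeta}(f)=\C\cup\{\infty\}$ for $m$-a.e.\ $\zeta$, and your proposal does not prove this: it announces a ``Salem--Zygmund / Makarov-type law of the iterated logarithm'' for the point $\infty$ and a ``Borel--Cantelli / second-moment argument'' for a countable dense set $\{w_j\}$, and then concedes in your final paragraph that the required decorrelation estimate for the events $\{|f(r_n\zeta)-w_j|<\varepsilon\}$ still ``needs'' to be carried out. That estimate is not routine: Makarov's LIL for Bloch functions is an upper bound, the lower-bound/recurrence statement you need is of a different nature, and establishing that the radial orbit is \emph{dense} in $\C$ (not merely unbounded or recurrent near $0$) at a.e.\ $\zeta$ is precisely the hard content of the theorem. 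As written, part (b) is a plan, not a proof.

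The paper avoids all of this with a dichotomy that makes randomization unnecessary: by the Bloch-space strengthening of Plessner's theorem (\cite[Corollary 6.15]{Pommerenke}), for any $g\in\mathcal{B}$ and $m$-a.e.\ $\zeta\in\T$, either $g$ has a finite nontangential limit at $\zeta$ or $C_{r_\zeta}(g)$ is maximal. It therefore suffices to exhibit a Bloch function with no radial limits, and the deterministic series $f(z)=\sum_{n\ge1}z^{2^n}$ works: it is a Hadamard lacunary series with bounded coefficients (hence in $\mathcal{B}$), and the Hardy--Littlewood Tauberian theorem shows it cannot have a radial limit at any $\zeta$, since $\sum\zeta^{2^n}$ never converges. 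If you want to salvage your approach, the cleanest fix is to keep your lacunary example (random or not) only to certify membership in $\mathcal{B}$ and absence of radial limits, and let the Plessner-type dichotomy do the work of producing maximal radial cluster sets.
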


\begin{proof}
(a) is a consequence of Theorem \ref{thm-BarthRippon-cons} (b) and the fact that each function in $\mathcal{B}$ is normal (see p. 71 in \cite{Pommerenke}).

(b) It is known that there is a function $f$ in $\mathcal{B}$ that has radial limits at no point of $\mathbb{T}$. For example, one can consider the function $f(z)=\sum _{n=1}^{\infty} z^{2^{n}}$ which is in $\mathcal{B}$ as a Hadamard lacunary series with bounded coefficients. By the Tauberian theorem of Hardy and Littlewood (which states that if a Hadamard lacunary power series has a radial limit at a point then the power series converges at that point), $f$ has no radial limits. We will now make use of a stronger version of Plessner's Theorem for functions in the Bloch space. More precisely, \cite[Corollary 6.15]{Pommerenke} tells us that for any function $g\in\mathcal{B}$ we have that for $m$-a.e. $\zeta \in \mathbb{T}$ either $g$ has a finite nontangential limit at $\zeta$ or the cluster set of $g$ along $r_{\zeta}$ is maximal. Thus, applying this to $f$, we get that for some set $E$ in $\mathbb{T}$ with zero arclength measure we have that $f\in\UU_{R} ^{\mathbb{T}\setminus E}(\D)\bigcap \mathcal{B}$.
\end{proof}

This result suggests the following question: 
\begin{question} In Theorem \ref{bloch} (b), can we replace $\UU_{R} ^{\mathbb{T}\setminus E}(\mathbb{D})$ by the smaller class $\UU_{A} ^{\mathbb{T}\setminus E}(\mathbb{D}, \rho)$?
\end{question}

\bibliographystyle{amsplain}
\bibliography{refs}
	
\end{document}